\def\@settitle{\begin{center}%
  \baselineskip14\p@\relax
  \bfseries
  \uppercasenonmath\@title
  \@title
  \ifx\@subtitle\@empty\else
     \\[1ex]\uppercasenonmath\@subtitle
     \footnotesize\mdseries\@subtitle
  \fi
  \end{center}%
}
\def\subtitle#1{\gdef\@subtitle{#1}}
\def\@subtitle{}
\newtheorem{theorem}{Theorem}
\newtheorem{lemma}{Lemma}
\newtheorem{defi}{Definition}
\newtheorem{problem}{Problem}
\newtheorem{question}{Question}
\newtheorem{conj}{Conjecture}
\DeclareMathOperator{\conv}{conv}
\DeclareMathOperator{\vol}{vol}
\DeclareMathOperator{\area}{area}
\DeclareMathOperator{\inter}{int}
\DeclareMathOperator{\bd}{bd}
\newcommand{\R}{\mathbb{R}}
\newcommand{\Sph}{\mathbb{S}}
\renewcommand{\S}{\mathcal{S}}
\begin{document}
\title[On the convex hull function]{On the convex hull and homothetic convex hull functions of a convex body}
\subtitle{}
\author[\'A. G.Horv\'ath]{\'Akos G.Horv\'ath}
\address {\'A. G.Horv\'ath \\ Department of Geometry \\ Institute of Mathematics\\
Budapest University of Technology and Economics\\
H-1111 Budapest\\
Hungary}
\email{ghorvath@math.bme.hu}

\author[Z. L\'angi]{Zsolt L\'angi}
\address {Z. L\'angi \\ MTA-BME Morphodynamics Research Group and Department of Geometry \\ Institute of Mathematics \\
Budapest University of Technology and Economics\\
H-1111 Budapest\\
Hungary}
\email{zlangi@math.bme.hu}
\date{}

\subjclass[2010]{52A38, 52A40, 26B15, 52B11}
\keywords{centered convex body, convex hull function, homothetic convex hull function, difference body, polar body, projection body, illumination bodies}

\begin{abstract}
The aim of this note is to investigate the properties of the convex hull and the homothetic convex hull functions of a convex body $K$ in Euclidean $n$-space, defined as the volume of the union of $K$ and one of its translates, and the volume of $K$ and a translate of a homothetic copy of $K$, respectively, as functions of the translation vector. In particular, we prove that the convex hull function of the body $K$ does not determine $K$. Furthermore, we prove the equivalence of the polar projection body problem raised by Petty, and a conjecture of G.Horv\'ath and L\'angi about translative constant volume property of convex bodies. We give a short proof of some theorems of Jer\'onimo-Castro about the homothetic convex hull function, and prove a homothetic variant of the translative constant volume property conjecture for $3$-dimensional convex polyhedra. We also apply our results to describe the properties of the illumination bodies of convex bodies.
\end{abstract}

\thanks{Z. L\'angi was supported by grants K119670 and BME Water Sciences \& Disaster Prevention TKP2020 IE of the National Research, Development and Innovation Fund (NRDI), by the \'UNKP-20-5 New National Excellence Program of the Ministry for Innovation and Technology, and the J\'anos Bolyai Scholarship of the Hungarian Academy of Sciences.}

\maketitle

\section{Introduction}

In this paper we denote by $\R^n$ the Euclidean $n$-space, and for any $x,y \in \R^n$, the closed segment with endpoints $x,y$ by $[x,y]$. We call a compact, convex set with nonempty interior a \emph{convex body}.
%For brevity, we call an $o$-symmetric convex body \emph{centered}.
It is well known that for any $o$-symmetric body $K$ there is a unique $n$-dimensional norm whose unit ball is $K$; for any $x \in \R^n$, we denote by $||x||_K$ the norm of $x$ in this norm, and write $||x||$ for the Euclidean norm of $x$. We denote by $\vol_n(K)$ and $\conv{H}$ the $n$-dimensional volume of a body $K$ and the convex hull of the set $H$, respectively. We use the notation $\Sph^{n-1}$ for the set of unit vectors in $\R^n$, and for any unit vector $u \in \Sph^{n-1}$ and convex body $K \subset \R^n$, we let $K|u^\perp$ be the orthogonal projection of $K$ onto the hyperplane through $o$ with normal vector $u$. The notation $P+Q$ means the Minkowski sum of the bodies $P$ and $Q$.

A famous result of Meyer, Reisner and Schmuckenschl\"ager \cite{meyer-reisner-schmuckenschlager} states that if $K \subset \R^n$ is an $o$-symmetric convex body
with the property that the volume $\vol_n (K \cap (x+K))$ depends only on the Minkowski norm $||x||_K$, then $K$ is an ellipsoid.
This result is a variant of the so-called \emph{covariogram problem} of Matheron \cite{matheron}, which asks whether the function $x \mapsto \vol_n (K \cap (x+K))$  (called \emph{covariogram function}) determines the convex body $K$.

One of our main concepts is introduced in the next definition.

\begin{defi}\label{defn:convexhullfunction}
Let $K$ be an $n$-dimensional convex body and to a translation vector $t\in \mathbb{R}^n$ associate the value ${G}_K(t)=\vol\mathrm{conv}\{K\cup (K+t)\}$. The function defined in this way is called the \emph{convex hull function} associated to the body $K$.
\end{defi}

This function first appeared in the literature in a 1950 paper of F\'ary and R\'edei \cite{fary-redei}, who proved that the volume of the convex hull of two convex bodies moving at constant velocity is a convex function of time (see \cite[Satz 4]{fary-redei}, and also \cite{Ahn} for the special case of polytopes).
This statement was generalized by Rogers and Shephard \cite{rogers} for general point systems, which they called \emph{linear parameter systems}, later also called \emph{shadow systems}. The method introduced by Rogers and Shephard became an important tool in solving geometric optimization problems regarding convex bodies.

The convex hull function gave rise to a number of interesting problems, many of which are still open; for a collection of such problems see the survey paper \cite{gho-surveyonconvhullvolume} and the references therein. Nevertheless, it is an interesting fact that the `dual' of the covariogram problem, that is, the question whether
the convex hull function $G_K(t)$ determines the body $K$ or not has been asked only recently by \'A. Kurusa in a private communication. For more information on volume functions defined by convex bodies that determine the body, the interested reader is referred to the survey \cite{gho-volumefunction}.

The so-called \emph{translative constant volume property} of a convex body $K$, meaning that $G_K(x)$ depends only on $||x||_K$, was defined in \cite{gho-langi-convhull} in an investigation of some extremal properties of the convex hull function and other related volume functions. The authors of \cite{gho-langi-convhull} characterized the plane convex bodies satisfying the translative constant volume property, and conjectured that any such centrally symmetric convex body in $\R^n$ with $n \geq 3$ is an ellipsoid.

In 2015, an interesting generalization of the convex hull function was introduced by Jer\'onimo-Castro \cite{castro}, in which he replaced the translate of the convex body by a homothetic copy of the body with a fixed ratio. He used this notion to prove the homothetic version of the translative constant volume conjecture in \cite{gho-langi-convhull}, and some other results related to the homothetic version of the result of Meyer, Reisner and Schmuckenschl\"ager for intersections, proved in \cite{meyer-reisner-schmuckenschlager} as well.

Following \cite{castro}, we define the following:

\begin{defi}\label{defn:homotheticconvexhullfunction}
Let $K \subset \R^n$ be a convex body containing the origin $o$ in its interior, and $\lambda \in [0,1)$. Then the function $G_{K,\lambda} : \R^n \to \R$, defined by
$$
G_{K,\lambda}(t):=\vol_n\conv\left\{K\cup \left(\lambda K+t\right)\right\},
$$
is called the \emph{$\lambda$-homothetic convex hull function} associated to $K$.
\end{defi}

It is worth noting that this function is closely related to the so-called illumination bodies, defined in \cite{werner1} in 1994 as follows (see also \cite{werner3}).

\begin{defi}\label{defn:illuminationbodies}
Let $K \subset \R^n$ be a convex body, and let $\delta > 0$. Then the convex body
\[
K^{\delta} = \left\{ x \in \R^n : \vol_n \conv \left( K \cup \{ x \} \right) \leq \vol_n(K) + \delta \right\}
\]
is called an \emph{illumination body} associated to $K$.
\end{defi}

Indeed, the sublevel sets of the function $G_{K,0}$ clearly correspond to the illumination bodies of $K$, where the fact that these sets are convex bodies follows from the result \cite{fary-redei} of F\'ary and R\'edei already mentioned in the introduction. Furthermore, the observation that a similar statement holds for any value $0 \leq \lambda < 1$ follows from a result of Jer\'onimo-Castro \cite{castro}, which we are going to introduce in detail in Section~\ref{sec:homotheticconvexhull}.

The goal of this paper is to investigate the properties of the convex hull and $\lambda$-homothetic convex hull functions of convex bodies.

In Section~\ref{sec:convexhull} we collect our results about the convex hull functions. More specifically, we show that a convex body $K$ is characterized by its convex hull function up to translations if and only if it is centrally symmetric. A variant of this problem is the conjecture in \cite{gho-langi-convhull} about centrally symmetric convex bodies satisfying the translative constant volume property. We show that this conjecture is equivalent to the polar projection body problem introduced by Petty in \cite{petty} (see also the papers \cite{gruber} of Gruber and \cite{lutwak-affineisop} of Lutwak). Finally, we prove that for $n = 3$, any $n$-dimensional convex body satisfying the translative constant volume property and having constant brightness or width is a ball.

In Section~\ref{sec:homotheticconvexhull} we examine the properties of the homothetic convex hull functions. First, we give simple proofs of some theorems of Jer\'onimo-Castro in \cite{castro}. Finally, motivated by the proof of the polar projection body problem by Martini in \cite{martini-polarprojection} for convex polytopes, we propose a homothetic version of the translative constant volume property conjecture and prove it for $3$-dimensional convex polyhedra.

\section{The convex hull function}\label{sec:convexhull}

We start with an elementary property of the convex hull function.

\begin{lemma}\label{lem:brightness}
Let $\alpha \in \R$ be an arbitrary real number and $u \in \Sph^{n-1}$. Then we have
 \begin{equation}\label{eq:basicid}
 G_K(\alpha u)=\vol_n(K)+|\alpha|\vol_{n-1}(K|u^\perp).
 \end{equation}
Consequently, the convex hull function $G_K$ determines the volume $\vol_n(K)$ and the brightness function $u\mapsto \vol_{n-1}(K|u^\perp)$, $u\in \S^{n-1}$, of $K$, and vice versa.
\end{lemma}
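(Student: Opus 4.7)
The plan is to reduce the convex hull to a Minkowski sum and then compute by Fubini. The first observation I would make is that for any convex body $K$ and vector $v \in \R^n$,
\[
\conv(K \cup (K+v)) = K + [0,v],
\]
where the right-hand side is the Minkowski sum. One inclusion follows because any $(1-t)x + t(y+v)$ with $x,y \in K$ and $t \in [0,1]$ can be rewritten as $[(1-t)x+ty] + tv \in K + tv \subset K + [0,v]$ by convexity of $K$; the reverse inclusion is immediate since $k + tv = (1-t)k + t(k+v)$ is a convex combination of a point of $K$ and a point of $K+v$. Applying this with $v = \alpha u$ gives $\conv(K \cup (K+\alpha u)) = K + [0,\alpha u]$.

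The second step is the standard volume computation for the Minkowski sum of a convex body and a segment. Assume first $\alpha \geq 0$. Project $K$ orthogonally onto $u^\perp$; for each $y \in K|u^\perp$ the fibre $K \cap (y + \R u)$ is a segment $[a(y), b(y)]$ in the direction $u$. The fibre of $K + [0,\alpha u]$ over the same $y$ is then $[a(y), b(y)+\alpha]$, of length $b(y)-a(y)+\alpha$. By Fubini,
\[
\vol_n(K + [0,\alpha u]) = \int_{K|u^\perp} (b(y)-a(y)+\alpha)\,d y = \vol_n(K) + \alpha\, \vol_{n-1}(K|u^\perp).
\]
For $\alpha < 0$, the same argument with $-\alpha u$ in place of $\alpha u$ (noting that $\conv(K \cup (K+\alpha u)) = \conv((K+\alpha u) \cup K)$ is a translate of $K + [0, |\alpha| u]$) produces the same formula with $|\alpha|$ in place of $\alpha$, establishing \eqref{eq:basicid}.

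For the final ``Consequently'' statement, fix $u \in \Sph^{n-1}$ and let $\alpha$ vary: \eqref{eq:basicid} shows that $\alpha \mapsto G_K(\alpha u)$ is an affine function of $|\alpha|$ whose constant term is $\vol_n(K)$ and whose slope is $\vol_{n-1}(K|u^\perp)$, so both quantities are read off from $G_K$. Conversely, knowing $\vol_n(K)$ and the brightness function determines $G_K(\alpha u)$ for every $\alpha \in \R$ and $u \in \Sph^{n-1}$, hence on all of $\R^n$.

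There is no real obstacle here; the only step that requires a moment's care is the identification $\conv(K \cup (K+v)) = K + [0,v]$, after which \eqref{eq:basicid} is a one-line Fubini computation.
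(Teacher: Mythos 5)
Your proof is correct and follows essentially the same route as the paper, which simply cites Cavalieri's principle for \eqref{eq:basicid}; you have spelled out that Cavalieri computation via the clean intermediate identity $\conv(K \cup (K+v)) = K + [0,v]$ and a Fubini integration over the fibres, which is exactly the argument the reference encapsulates.
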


\begin{proof}
The equality in (\ref{eq:basicid}) is an easy consequence of Cavalieri's principle (see e.g. \cite[Section A.5]{gardner}). The second statement follows from the definition of the brightness function and (\ref{eq:basicid}).
\end{proof}

In \cite{gardner-volcic} (see also \cite{goodey-schneider-weil}) the authors proved that, apart from parallelepipeds, no convex body is characterized by its brightness function. Our next result can be regarded as the counterpart of this result for the convex hull function. Before stating it, we remark that
for any convex body $K \subset \R^n$ and $x \in \R^n$, the convex hull functions of $K$ and $x+K$ are equal, and hence, the convex hull function of a convex body can characterize the body only up to translations.

\begin{theorem}\label{thm:characterization_ch}
A convex body $K \subset \R^n$ is characterized by its convex hull function $G_{K}$ up to translations if and only if $K$ is centrally symmetric.
\end{theorem}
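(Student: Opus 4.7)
The plan is to handle the two implications separately, with Lemma~\ref{lem:brightness} as the main reduction tool: $G_K$ carries exactly the data of $\vol_n(K)$ together with the brightness function $u \mapsto \vol_{n-1}(K|u^\perp)$. For the necessity direction, the observation is that both quantities are invariant under $K \mapsto -K$, since $(-K)|u^\perp = -(K|u^\perp)$ and volume is reflection-invariant. Hence $G_{-K} = G_K$, so the assumption that $G_K$ characterizes $K$ up to translations forces $-K$ to be a translate of $K$, which is exactly the statement that $K$ is centrally symmetric.

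For the sufficiency direction, I assume after a translation that $K = -K$, and take any convex body $L$ with $G_L = G_K$, so that $\vol_n(L) = \vol_n(K)$ and the two bodies have the same brightness function. The key step is to move from the brightness equality to an identity at the level of surface area measures. By Cauchy's formula $\vol_{n-1}(M|u^\perp) = \frac{1}{2}\int_{\Sph^{n-1}}|\langle u, v\rangle|\,dS_M(v)$, and since the cosine kernel annihilates the odd part of any signed measure (and is injective on even measures, by Aleksandrov), having the same brightness means $(S_L)_{\mathrm{even}} = (S_K)_{\mathrm{even}}$. Because $K = -K$ makes $S_K$ itself even, this simplifies to
\[
S_K = \tfrac{1}{2}\bigl(S_L + S_{-L}\bigr).
\]

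Since $h_K$ is also even, integrating this identity against $h_K$ (using the change of variable $v \mapsto -v$ to see $\int h_K\,dS_{-L} = \int h_K\,dS_L$) yields the mixed volume identity
\[
n\vol_n(K) = \int h_K\,dS_K = \int h_K\,dS_L = n\,V(L[n-1], K).
\]
Minkowski's first inequality $V(L[n-1], K)^n \geq \vol_n(L)^{n-1}\vol_n(K)$, together with its equality case and the hypothesis $\vol_n(L) = \vol_n(K)$, then forces $L$ to be homothetic to $K$; comparing volumes gives homothety ratio $1$, so $L$ is a translate of $K$. The main subtlety lies in the measure-level translation step; once the identity $S_K = \tfrac{1}{2}(S_L + S_{-L})$ is secured, the rest is a clean application of standard mixed volume inequalities.
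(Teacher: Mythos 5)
Your proof is correct, and in spirit it follows the same route as the paper: both directions reduce via Lemma~\ref{lem:brightness} to the brightness function plus the volume, and the necessity direction (observing $G_K = G_{-K}$) is identical. The difference is in how the sufficiency direction is handled. The paper reduces to the projection-class framework and simply cites Gardner (Theorems~4.4.3 and 3.3.9): the projection class of $K$ contains a unique $o$-symmetric member, the Blaschke body, which is the unique volume maximizer in the class; when $K$ is already $o$-symmetric, it coincides with its own Blaschke body, so any $L$ with the same brightness and the same volume must be a translate of $K$. Your argument unrolls exactly those citations: you use Aleksandrov's injectivity of the cosine transform on even measures to get $S_K = \tfrac12(S_L + S_{-L})$, then pair with $h_K$ and invoke Minkowski's first inequality together with its equality case. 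Those are precisely the two ingredients underlying the Blaschke-body theorems the paper cites, so your version is more self-contained but not a different mechanism. One small point worth being careful about: in passing from $V(L[n-1],K)^n \geq \vol_n(L)^{n-1}\vol_n(K)$ to ``$L$ is a translate of $K$,'' you should note that the equality case of Minkowski's first inequality gives that $K$ and $L$ are positively homothetic (this requires both to be full-dimensional, which holds since both are convex bodies), and only then does $\vol_n(L)=\vol_n(K)$ force the ratio to be $1$; you do say this, so the argument is complete.
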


\begin{proof}
By Lemma~\ref{lem:brightness}, a convex body is characterized by its convex hull function if and only if it is characterized by its brightness function and volume. Recall that the brightness function of a convex body $K$ is the support function of a convex body, called the \emph{projection body $\Pi K$} of $K$, and the family of convex bodies whose projection body is $\Pi K$ is called the \emph{projection class} of $K$ (see e.g. \cite{gardner}). Thus, the problem of finding the convex bodies $K \subset \R^n$ determined by $G_K$ is equivalent to finding the convex bodies $K \subset \R^n$ with the property that the unique convex body in the projection class of $K$ with volume equal to $\vol_n(K)$ is $K$.

On the other hand, a projection class contains exactly one $o$-symmetric convex body (called the \emph{Blaschke body} of the elements of the class), and this body has unique maximal volume in the class (see \cite{gho-volumefunction} or Theorems 4.4.3 and 3.3.9 in \cite{gardner}). Thus, every $o$-symmetric convex body is characterized by its convex hull function, implying the assertion for centrally symmetric convex bodies.

Finally, if $K$ is not centrally symmetric, then $-K$ is not a translate of $K$. As we clearly have $\Pi K = \Pi (-K)$, and $\vol_n(K) = \vol_n(-K)$, implying that $G_K = G_{-K}$, showing that if $K$ is not centrally symmetric, then its convex hull function does not characterize $K$ up to translations.
\end{proof}

We note that the first-named author in \cite[Theorem 6]{gho-volumefunction} gave an example of two convex bodies $K, L \subset \R^n$ for any $n \geq 2$ such that $G_K = G_L$, and $L$ is not the image of $K$ under any isometry of $\R^n$.

Next, we recall the notion of translative constant volume property from \cite{gho-langi-convhull}; ote that two convex bodies are said to \emph{touch each other} if they intersect and their interiors are disjoint. We remark also that the translates $x+K$ and $y+K$ of a convex body $K$ touch each other if and only if $\|x-y\|_{K-K} = 1$.

%The following open problem leads to another look on volume functions can be connected to the homothetic copies of a convex body.
\begin{defi}\label{def:translconstvol}
If, for a convex body $K \in \R^n$, we have that $\vol_n (\conv ((v+K) \cup (w+K)))$ has the same value for any touching pair of translates,
we say that $K$ satisfies the \emph{translative constant volume property}.
\end{defi}

We note that an $o$-symmetric convex body $K$ satisfies the translative constant volume property if and only if $G_K(x)$ depends only on the norm $\|x\|_K$ of $x$. Hence, the problem of characterizing the convex bodies satisfying the translative constant volume property is the analogue for the convex hull function of the problem of characterizing the convex bodies $K$ whose covariogram function depends only on $\| x \|_K$, solved in the paper \cite{meyer-reisner-schmuckenschlager} of Meyer, {Reisner and Schmuckenschl\"ager.

We recall that a $2$-dimensional $o$-symmetric convex curve is a \emph{Radon curve}, if, for the convex hull $K$ of a suitable affine image of the curve, it holds that its polar $K^\circ$ is a rotated copy of $K$ by $\frac{\pi}{2}$ (cf. \cite{martini-swanepoel-antinorm}). It is well known that a curve is Radon if and only if in the norm induced by its convex hull, Birkhoff orthogonality is symmetric (see, e.g. \cite{gho-langi-convhull}).
The next theorem can be found in \cite{gho-langi-convhull}.

\begin{theorem}[G.Horv\'ath, L\'angi, \cite{gho-langi-convhull}]\label{thm:planartcvp}
For any plane convex body $K$, the following are equivalent.
	\begin{itemize}
		\item[(1)] $K$ satisfies the translative constant volume property.
		\item[(2)] The boundary of the central symmetral of $K$ is a Radon curve.
		\item[(3)] $K$ is a body of constant width in a Radon norm.
	\end{itemize}
\end{theorem}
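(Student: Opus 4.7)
My plan is to translate condition (1) into a pointwise identity on $\Sph^1$ via Lemma~\ref{lem:brightness}, recognize this identity as a self-polarity up to rotation of the central symmetral, and then match it against the standard characterizations of Radon curves and of bodies of constant width in Radon norms.

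For the first step, let $K_0=\tfrac12(K-K)$ be the central symmetral of $K$, with support function $h_{K_0}$ and radial function $\rho_{K_0}$, and let $R$ denote the rotation by $\pi/2$. Two translates $v+K$ and $w+K$ touch exactly when $v-w\in\bd(K-K)=\bd(2K_0)$, i.e., when $v-w=2\rho_{K_0}(u)\,u$ for some $u\in\Sph^1$. Since $\vol_1(K|u^\perp)=h_{K-K}(Ru)=2h_{K_0}(Ru)$, identity \eqref{eq:basicid} with $n=2$ recasts condition (1) as
\[
\rho_{K_0}(u)\,h_{K_0}(Ru)=c,\qquad u\in\Sph^1,
\]
for some $c>0$. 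Using $\rho_{K_0}(u)=1/h_{K_0^\circ}(u)$, this is equivalent to $h_{K_0}(Ru)=c\,h_{K_0^\circ}(u)$, i.e., $R^{-1}K_0=c\,K_0^\circ$, which is precisely the condition that $K_0$ be a Radon curve. This establishes (1)$\Leftrightarrow$(2).

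For (2)$\Leftrightarrow$(3) I would invoke the classical equivalence, in the theory of Minkowski planes, between the Radon property of the unit ball and symmetry of Birkhoff orthogonality, together with the fact that in a Radon plane with unit ball $B$ the bodies of constant Minkowski width are characterized by their difference body being a homothet of the rotated polar $R^{-1}B^\circ$. Applied with $B=K_0$, and using the Radon identity $R^{-1}K_0=c\,K_0^\circ$ already derived, constant $K_0$-width of $K$ becomes a proportionality between $h_{K-K}$ and $h_{K_0}\circ R$ that is equivalent to the displayed identity, and hence to condition (1).

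The step I expect to be the main obstacle is this last one: the bookkeeping of normalizations and rotations in the definition of Minkowski width and the Radon relation is delicate, as several conventions in the literature differ by factors of $2$ or by an interchange of $u$ and $Ru$. Once a consistent convention is fixed, all three conditions turn out to be readings of the single identity $R^{-1}K_0=c\,K_0^\circ$ for some $c>0$. A secondary, minor subtlety is that the definition of Radon curve allows replacing $K_0$ by an affine image; this is harmless because condition (1) is itself invariant under invertible linear transformations of $\R^2$, with the constant $c$ rescaled by the absolute value of the Jacobian.
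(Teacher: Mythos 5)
Your proof is correct and follows the same method the paper employs for the $n$-dimensional analogue in Theorem~\ref{thm:conjequivpolarproblem}: use the touching condition and Lemma~\ref{lem:brightness} to rewrite (1) as $\rho_{K_0}(u)\,h_{K_0}(Ru)\equiv c$, recognize this as the self-polarity $R^{-1}K_0 = c\,K_0^\circ$ characterizing Radon curves, and note that (2)$\Leftrightarrow$(3) is immediate because constant $B$-width of $K$ forces $B$ to be a homothet of $K_0$ and the Radon property is affine-invariant. The only step you leave implicit, namely matching the specific identity $R^{-1}K_0 = c\,K_0^\circ$ to the paper's definition of Radon curve through an arbitrary affine image, is closed by the $2\times 2$ identity $T^{\mathrm{T}}RT=\det(T)\,R$, or equivalently by the linear invariance of condition (1) that you observe.
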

Motivated by Theorem~\ref{thm:planartcvp} and the well-known fact that if every planar section of a normed space is Radon, then the unit ball of the space is an ellipsoid (cf. \cite{alonso-benitez} or \cite{martini-swanepoel-antinorm}), the authors in \cite{gho-langi-convhull} proposed Conjecture~\ref{conj:translconstvol}.

\begin{conj}\label{conj:translconstvol}
Let $n \geq 3$. Then any $o$-symmetric $n$-dimensional convex body satisfying the translative constant volume property is an ellipsoid.
\end{conj}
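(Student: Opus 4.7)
My plan is to reduce Conjecture~\ref{conj:translconstvol} to Petty's polar projection body problem, and thereby establish that the two statements are in fact equivalent. The starting point is Lemma~\ref{lem:brightness}: for any $x = \alpha u$ with $u \in \Sph^{n-1}$ and $\alpha \in \R$,
$$G_K(\alpha u) = \vol_n(K) + |\alpha|\vol_{n-1}(K|u^\perp).$$
Since $K$ is $o$-symmetric, $\|\cdot\|_K$ is a norm and $\|\alpha u\|_K = |\alpha| \|u\|_K$. The translative constant volume property means exactly that $G_K(x)$ depends only on $\|x\|_K$, which in view of the formula above amounts to saying that $|\alpha|\vol_{n-1}(K|u^\perp)$ depends only on the product $|\alpha|\|u\|_K$. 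Specializing to $\alpha = 1/\|u\|_K$ shows that the ratio $\vol_{n-1}(K|u^\perp)/\|u\|_K$ is a positive constant $c$, independent of $u \in \Sph^{n-1}$.

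Next, I would translate this identity into the language of projection and polar bodies. By definition, the support function of $\Pi K$ is $u \mapsto \vol_{n-1}(K|u^\perp)$, while the polar body $K^\circ$ has support function $u \mapsto \|u\|_K$ (since its radial function is the reciprocal $1/\|u\|_K$). Hence the identity $\vol_{n-1}(K|u^\perp) = c\|u\|_K$ is precisely an equality of support functions, so that $\Pi K = c K^\circ$. Taking polars, this is equivalent to the polar projection body $\Pi^{*}K := (\Pi K)^\circ$ being a homothet of $K$. Each step reverses, so for an $o$-symmetric convex body $K$ in $\R^n$, the translative constant volume property is equivalent to the assertion that $\Pi^{*}K$ is a positive homothet of $K$.

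At this point Conjecture~\ref{conj:translconstvol} takes the following form: if $K \subset \R^n$ ($n \geq 3$) is an $o$-symmetric convex body with $\Pi^{*}K$ a homothet of $K$, then $K$ is an ellipsoid. This is exactly Petty's polar projection body problem, and it constitutes the main obstacle: Petty's question is a famous open conjecture, verified only in limited settings (notably by Martini for convex polytopes, which the paper's later section exploits to obtain a polyhedral homothetic analogue). A complete resolution of Conjecture~\ref{conj:translconstvol} is therefore out of reach in isolation: the best outcome I can expect from this approach is precisely the equivalence just sketched, together with conditional results obtained by imposing additional hypotheses. For instance, in dimension $3$ one can combine the identity $\vol_{n-1}(K|u^\perp) = c\|u\|_K$ with a constant brightness or constant width assumption and invoke classical Blaschke-type characterizations of the Euclidean ball; but a full, unconditional proof would demand either new affine isoperimetric machinery or a genuine new characterization of ellipsoids via the operator $\Pi^{*}$.
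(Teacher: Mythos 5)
You have correctly recognized that the statement is a conjecture, not a theorem: neither the paper nor you can prove it outright, and the honest content is a reduction. Your reduction of the translative constant volume property (via Lemma~\ref{lem:brightness}, the identity $\vol_{n-1}(K|u^\perp)=c\,\|u\|_K$, and the polarity relations $h_{K^\circ}=1/\rho_K$) to the statement $\Pi^\circ K = \tfrac{1}{c}K$ is exactly the content and method of the paper's Theorem~\ref{thm:conjequivpolarproblem} in the $o$-symmetric case, so the approach matches; the only nit is the parenthetical ``since its radial function is the reciprocal $1/\|u\|_K$,'' which reads as $\rho_{K^\circ}=1/\|u\|_K$ and would force $h_K=\|\cdot\|_K$ — what you mean is $\rho_K=1/\|\cdot\|_K$, hence $h_{K^\circ}=1/\rho_K=\|\cdot\|_K$.
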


If $K \subset \R^n$ is a convex body, then the \emph{projection body} $\Pi K$ of $K$ is defined as the convex body whose support function is $h_{\Pi K} (u) = \vol_{n-1}(K | u^{\perp})$ for all $u \in \mathbb{S}^{n-1}$. The polar of the projection body of $K$ is called the \emph{polar projection body} of $K$, and is denoted by
$\Pi^\circ K = \left( \Pi K \right)^{\circ}$. A famous problem of convex geometry is the so-called \emph{polar projection problem} proposed by Petty in \cite{petty}, which is stated below.

\begin{conj}[Petty, \cite{petty}]\label{conj:polarprojection}
If an $o$-symmetric convex body $K \subset \R^n$ with $n \geq 3$ satisfies $\Pi^\circ K=\lambda K$ for some $\lambda \in \R$, then $K$ is an ellipsoid.
\end{conj}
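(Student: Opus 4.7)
The plan is to exploit Petty's projection inequality together with the strong rigidity that the hypothesis $\Pi^\circ K = \lambda K$ encodes. Writing $\rho_{\Pi^\circ K} = 1/h_{\Pi K}$ and $h_{\Pi K}(u) = \vol_{n-1}(K|u^\perp)$, the assumption reads
\[
\vol_{n-1}(K|u^\perp) \cdot h_K(u) \equiv \frac{1}{\lambda} \qquad (u \in \Sph^{n-1}),
\]
an identity pointwise coupling the brightness of $K$ in every direction to its support in that direction. First I would compute $\vol_n(\Pi^\circ K) = \lambda^n \vol_n(K)$ from $\Pi^\circ K = \lambda K$ and substitute this into Petty's projection inequality $\vol_n(K)^{n-1} \vol_n(\Pi^\circ K) \le c_n$, whose equality cases are exactly ellipsoids. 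If the hypothesis can be shown to realize the equality case, the result follows immediately.

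The heart of the argument therefore becomes: does the reciprocal relation above force equality in Petty's inequality? I would try to establish this using the cosine transform representation $\vol_{n-1}(K|u^\perp) = \tfrac{1}{2} \int_{\Sph^{n-1}} |\langle u,v\rangle| \, dS_{n-1}(K,v)$, turning the hypothesis into an integral equation linking $h_K$ to the surface area measure $S_{n-1}(K,\cdot)$. Invoking Aleksandrov's uniqueness theorem for the cosine transform on even measures on $\Sph^{n-1}$, the goal would be to reduce this to a Monge--Amp\`ere-type PDE on $h_K$ and show that its only convex even solutions are support functions of ellipsoids.

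The main obstacle is, bluntly, that this is precisely the content of a long-standing open conjecture. Reductions of the form above have been carried out in various guises for over fifty years, and the resulting fixed-point problem for the nonlinear operator $K \mapsto \Pi^\circ K$ has resisted every attempt at a general solution: Petty's inequality alone does not force equality, and the integral equation for $S_{n-1}(K,\cdot)$ is genuinely nonlocal. Martini's result in \cite{martini-polarprojection} handles a polytopal subcase (and is leveraged in this paper for the homothetic variant in Section~\ref{sec:homotheticconvexhull}), but the smooth case eludes the combined tools of affine isoperimetry and Brunn--Minkowski theory. My most realistic avenue for new progress would be to use the equivalence proved elsewhere in this paper to replace the cosine transform by the pointwise identity $G_K(\alpha u) = \vol_n(K) + |\alpha|\vol_{n-1}(K|u^\perp)$ from Lemma~\ref{lem:brightness}, recasting Petty's condition as a translative constant volume property and attempting a differential-geometric rigidity argument on $\bd K$; converting that into a full proof of ellipsoidality for all $n \ge 3$ is, however, exactly the open problem.
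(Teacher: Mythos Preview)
The statement you were asked to prove is labeled \texttt{conj} in the paper, not \texttt{theorem}: it is Petty's polar projection body problem, and the paper does \emph{not} prove it. The paper merely states it as Conjecture~\ref{conj:polarprojection} and then, in Theorem~\ref{thm:conjequivpolarproblem}, shows that it is equivalent to Conjecture~\ref{conj:translconstvol} on the translative constant volume property. There is thus no ``paper's own proof'' to compare your proposal against.

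Your proposal is entirely appropriate in that it recognizes this. The approach you sketch---rewriting $\Pi^\circ K=\lambda K$ as the pointwise identity $\vol_{n-1}(K|u^\perp)\,h_K(u)\equiv 1/\lambda$, feeding the resulting volume relation into Petty's projection inequality, and hoping to land in its equality case---is a natural first attack, and your diagnosis of why it stalls is accurate: the hypothesis does not by itself force equality in Petty's inequality, and the resulting integral/Monge--Amp\`ere problem is exactly the open content of the conjecture. Your closing remark, that one could instead pass through the equivalence of Theorem~\ref{thm:conjequivpolarproblem} and Lemma~\ref{lem:brightness} to recast the question in terms of $G_K$, is precisely what the paper does---but the paper stops at the equivalence and does not claim a proof either. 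In short, there is no gap in your assessment; the gap is in the literature.
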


We remark that in a more general, non-symmetric form, this problem asks for the characterization of the convex bodies $K$ satisfying the property that their polar projection bodies and difference bodies are similar. This version was settled by Martini in \cite{martini-polarprojection} for convex polytopes, who proved, using an elegant argument, that the only convex polytopes with the above property are the simplices.
Our next result establishes a connection between Conjecture~\ref{conj:translconstvol} and Conjecture~\ref{conj:polarprojection}.

\begin{theorem}\label{thm:conjequivpolarproblem}
Let $n \geq 3$. Then for any convex body $K \subset \R^n$ the following are equivalent.
\begin{itemize}
\item $K$ satisfies the translative constant volume property.
\item For some $\lambda > 0$, $\Pi^\circ K=\lambda (K-K)$ holds.
\end{itemize}
\end{theorem}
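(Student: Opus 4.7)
The plan is to translate both conditions into statements about the support function $h_{\Pi K}$ and compare them using the duality between support functions and gauge functions.

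First, I would use Lemma~\ref{lem:brightness} to rewrite the convex hull function globally. Writing any $t\in\R^n\setminus\{o\}$ as $t=\|t\| u$ with $u\in\Sph^{n-1}$ and invoking the evenness and $1$-homogeneity of the support function $h_{\Pi K}$, the identity (\ref{eq:basicid}) yields
\[
G_K(t)=\vol_n(K)+h_{\Pi K}(t)\qquad\text{for all } t\in\R^n.
\]
This step trivializes the problem: the translative constant volume property is now a condition on the single $1$-homogeneous function $h_{\Pi K}$.

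Next I would translate the notion of ``touching pair'' of translates into a norm condition. Since $v+K$ and $w+K$ touch precisely when $\|v-w\|_{K-K}=1$ (and since $G_K(w-v)=\vol_n(\conv((v+K)\cup(w+K)))$), the translative constant volume property is equivalent to the statement that $h_{\Pi K}(t)$ is constant on the unit sphere of the difference body. Because both $h_{\Pi K}$ and the gauge $\|\cdot\|_{K-K}$ are positively homogeneous of degree $1$ (note $K-K$ is $o$-symmetric with $o$ in its interior, so its gauge is well-defined), this constancy is equivalent to the existence of $c>0$ with
\[
h_{\Pi K}(t)=c\,\|t\|_{K-K}\qquad\text{for all } t\in\R^n.
\]

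Finally, I would convert the gauge to a support function via polarity. Using the standard identity $\|t\|_{K-K}=h_{(K-K)^\circ}(t)$, the previous display becomes $h_{\Pi K}=h_{c(K-K)^\circ}$, which by uniqueness of support functions means $\Pi K=c\,(K-K)^\circ$. Taking polars and using $(\mu L)^\circ=\mu^{-1}L^\circ$ and $(L^\circ)^\circ=L$ for $o$-symmetric convex bodies, this is equivalent to
\[
\Pi^\circ K=\tfrac{1}{c}(K-K),
\]
that is, $\Pi^\circ K=\lambda(K-K)$ with $\lambda=1/c>0$. Each implication in this chain is reversible, which gives both directions of the equivalence.

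I do not anticipate a genuine technical obstacle here: the argument is a clean dictionary between the geometric conditions and support-function identities, and the theorem does not assume central symmetry of $K$ (which would only be needed if we further wanted to conclude $K$ is an ellipsoid, i.e., to connect both sides to Conjectures~\ref{conj:translconstvol} and \ref{conj:polarprojection}). The only points requiring care are verifying that the identity $G_K(t)=\vol_n(K)+h_{\Pi K}(t)$ holds for all $t$ (not only along rays, which follows from homogeneity), and correctly accounting for the gauge--support-function duality between $K-K$ and $(K-K)^\circ$.
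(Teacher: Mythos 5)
Your proposal is correct and follows essentially the same path as the paper: Lemma~\ref{lem:brightness} to express $G_K$ via $h_{\Pi K}$, the touching condition $\|v-w\|_{K-K}=1$, and gauge/support-function polarity to pass between $\Pi K$ and $(K-K)^\circ$. The only (minor) difference is that you handle general $K$ uniformly via $K-K$ from the outset, whereas the paper first treats the $o$-symmetric case with $\rho_K$ and $h_{K^\circ}$ and then remarks that replacing $2\rho_K$ by $\rho_{K-K}$ covers the general case.
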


\begin{proof}
First, assume that $K \subset \R^n$ is an $o$-symmetric convex body.
Recall that the \emph{gauge function} of $K$ is defined as the function $\rho_K : \mathbb{S}^{n-1} \to \R$, $\rho_K(u) = \sup \{ \tau : \tau > 0, \tau u \in K\}$. The effect of polarity on the gauge and the support functions of $K$ is well known, and can be summarized by the equalities $\rho_{K^{\circ}} = \frac{1}{h_K}$
and $h_{K^{\circ}} = \frac{1}{\rho_K}$.

Since $K$ is $o$-symmetric, we clearly have that for any $u \in \mathbb{S}^{n-1}$ and $\tau > 0$, $K + \tau u$ touches $K$ if and only if $\tau = 2\rho_K(u)$.
Thus, by Lemma~\ref{lem:brightness}, if $K$ satisfies the translative constant volume property, then there is some constant $\Delta > 0$ such that for all $u \in \mathbb{S}^{n-1}$,
\begin{equation}\label{eq:reformofconstvolprop}
\Delta = 2 \rho_K(u) \vol_{n-1}(K|u^\perp) = 2 \frac{h_{\Pi K} (u)}{h_{K^{\circ}}(u)}.
\end{equation}
Clearly, (\ref{eq:reformofconstvolprop}) is equivalent to $\frac{\Delta}{2} K^{\circ} = \Pi K$ and also to $\frac{2}{\Delta} K = \Pi^{\circ} K$. This implies Theorem~\ref{thm:conjequivpolarproblem} for $o$-symmetric convex bodies. To prove it in the general case, it is sufficient to observe that for any $u \in \mathbb{S}^{n-1}$ and $\tau > 0$, $K+\tau u$ touches $K$ if and only if $\tau = \rho_{K-K}(u)$.
\end{proof}

\begin{remark}
For $o$-symmetric convex bodies there is a sharp upper bound on the constant $\lambda > 0$ in Theorem~\ref{thm:conjequivpolarproblem}.
Indeed, it was proved in \cite{martini-mustafaev} (see also \cite{gho-langi-convhull}) that
if $c^{tr}(K)$ denotes the maximum volume of the convex hull of the convex body $K$ and a translate of $K$ intersecting $K$, normalized by $\vol_n(K)$, then
\[
c^{tr}(K) \geq 1+\frac{2v_{n-1}}{v_n},
\]
with equality if and only if $K$ is an ellipsoid, where $v_i$ denotes the $i$-dimensional volume of the $i$-dimensional unit ball.
Furthermore, by Lemma~\ref{lem:brightness} and (\ref{eq:reformofconstvolprop}), if $K$ satisfies the translative constant volume property, then,
using the notation in the proof of Theorem~\ref{thm:conjequivpolarproblem}, we have $\Delta = \left( c^{tr}(K) - 1 \right) \vol_n(K)$, implying
$\lambda= \frac{2}{\Delta} \leq \frac{v_n}{v_{n-1}} \cdot \frac{1}{\vol_n(K)}$, with equality if and only if $K$ is an ellipsoid.
%Hence if the above conjectures are falls, then there is a centrally symmetric convex body $K$ holding the translative constant volume property with a constant $c_K$ is greater then $2v_{n-1}\vol_{n}K/v_n$.
\end{remark}

A seminal result of Howard \cite{howard}, proving a conjecture of Nakajima \cite{nakajima}, states that any convex body in $\R^3$ having both constant width and constant brightness is a ball. Our next result is a similar statement involving the translative contant volume property.

\begin{theorem}\label{thm:translconstinplane}
If $K \subset \R^3$ is a $3$-dimensional convex body of constant brightness or of constant width, and it satisfies the translative constant volume property, then it is a ball.
\end{theorem}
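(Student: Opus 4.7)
The plan is to combine Theorem~\ref{thm:conjequivpolarproblem} with Howard's theorem on bodies of constant width and constant brightness. The key observation is that, under the translative constant volume property, the two hypotheses in the statement become equivalent, so either of them forces both to hold, and Howard's theorem then finishes the proof in $\R^3$.

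First, I would rewrite the translative constant volume property in the form supplied by Theorem~\ref{thm:conjequivpolarproblem}: there is $\lambda > 0$ with
\[
\Pi^\circ K = \lambda (K - K),
\]
and, taking polars of both sides,
\[
\Pi K = \tfrac{1}{\lambda} (K - K)^\circ.
\]
This single identity ties together the projection body $\Pi K$ (whose support function is the brightness $u \mapsto \vol_{n-1}(K|u^\perp)$) and the difference body $K-K$ (whose support function is the width $u \mapsto h_K(u)+h_K(-u)$) via polarity.

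Next, I would translate the two geometric hypotheses into statements about these bodies. By definition of $\Pi K$, the body $K$ has constant brightness iff $\Pi K$ is a Euclidean ball centered at $o$. Similarly, $K$ has constant width iff the width function is constant, iff $K - K$ is a Euclidean ball. Since a centrally symmetric convex body is a Euclidean ball iff its polar is a Euclidean ball, the displayed identity $\Pi K = \tfrac{1}{\lambda}(K-K)^\circ$ gives at once that $\Pi K$ is a ball iff $K - K$ is a ball. Thus, under the translative constant volume property, constant brightness and constant width of $K$ are equivalent.

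Consequently, whichever of the two hypotheses we assume, $K$ has both constant brightness and constant width, and the celebrated result of Howard \cite{howard} (settling a conjecture of Nakajima \cite{nakajima}) implies that $K$ is a ball. There is no real obstacle here: the delicate input — equivalence of the translative constant volume property with a polar/projection body identity — is already encoded in Theorem~\ref{thm:conjequivpolarproblem}, and the deep geometric content in dimension three is supplied by Howard's theorem. The only thing to be careful about is the use of polarity, which requires that the bodies involved be centered at $o$; this is automatic for $\Pi K$ and for $K - K$.
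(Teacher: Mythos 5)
Your proposal is correct and follows essentially the same route as the paper: invoke Theorem~\ref{thm:conjequivpolarproblem} to obtain $\Pi^\circ K = \lambda(K-K)$, use polarity to conclude that $\Pi K$ (equivalently $\Pi^\circ K$) is a Euclidean ball iff $K-K$ is, so the two hypotheses become equivalent under the translative constant volume property, and then apply Howard's theorem. The paper's phrasing works directly with $\Pi^\circ K$ being a ball rather than taking polars of the whole identity, but this is a cosmetic difference.
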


\begin{proof}
By Theorem~\ref{thm:conjequivpolarproblem}, $K$ satisfies the translative constant volume property if and only if $\Pi^{\circ} K = \lambda (K-K)$ for some $\lambda > 0$. On the other hand, $K$ is of constant brightness if and only if $\Pi K$ is a Euclidean ball, which is also equivalent to the property that $\Pi^{\circ} K$ is a Euclidean ball. Similarly, $K$ is of constant width if and only if $K-K$ is a Euclidean ball. Consequently, if $K$ satisfies the translative constant volume property, then it is of constant brightness if and only if it is of constant width. Thus, Theorem~\ref{thm:translconstinplane} readily follows from the result of Howard in \cite{howard}.
\end{proof}

\begin{remark}
We note that the statement in Theorem~\ref{thm:translconstinplane} is false for plane convex bodies. Indeed, a plane convex body $K$ is of constant width if and only if it is of constant brightness, corresponding to the property that its central symmetral $\frac{1}{2}(K-K)$ is a Euclidean disk. On the other hand, since central symmetrization does not change the length of a longest chord of a convex body in any direction, we have that in this case $K$ satisfies also the translative constant volume property.
\end{remark}

\section{The homothetic convex hull function}\label{sec:homotheticconvexhull}

As it was mentioned in the introduction, a result of Meyer, Reisner and Schmunkenschl\"ager \cite{meyer-reisner-schmuckenschlager}
states that if for some $o$-symmetric convex body $K \subset \R^n$ and some $\tau > 0$, the volume $\vol_n (K \cap \{\tau K + x\})$ depends only on the Minkowski norm $\|x\|_K$, then $K$ is an ellipsoid. Motivated by this result, a similar problem was investigated by Jer\'onimo-Castro in \cite{castro}. In particular, he proved the following (cf. \cite[Theorems 1 and 2]{castro}):

\begin{theorem}[Jer\'onimo-Castro, 2015]\label{thm:castro2} Let $K\subset \R^n$ be a convex body with $o \in \inter(K)$ and let
$L \subset \R^n$ be an $o$-symmetric convex body. If there is a
number $\lambda \in (0, 1)$ such that $\vol_n \conv(K \cup \lambda (K + x))$ depends only on the Minkowski norm
$\|x\|_L$ , then $L$ is homothetic to $K$. In particular, if $\vol_n \conv(K \cup \lambda (K + x))$ is rotationally symmetric, then $K$ is a Euclidean ball.
\end{theorem}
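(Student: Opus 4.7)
The plan is to establish a clean identity expressing $G_{K,\lambda}$ in terms of the volume of the convex hull of $K$ with a single point, and then to read off the theorem from the definition of the illumination bodies. The target identity is
\begin{equation*}
G_{K,\lambda}(x) \;=\; (1-\lambda^n)\,\vol_n\bigl(\conv(K\cup\{x/(1-\lambda)\})\bigr) \;+\; \lambda^n\,\vol_n(K)
\end{equation*}
for every $x\in \R^n$, and its derivation is essentially a decomposition-of-a-cone argument.

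To prove the identity, set $p:=x/(1-\lambda)$ and note that $\lambda K+x$ is the image of $K$ under the homothety with centre $p$ and ratio $\lambda$. Hence, in the cone parametrization $C:=\conv(K\cup\{p\})=\{(1-s)k+sp:k\in K,\,s\in[0,1]\}$, the body $\lambda K+x$ is precisely the ``slice'' $s=1-\lambda$. A short computation using $\conv(A\cup B)=\bigcup_{s\in[0,1]}((1-s)A+sB)$ identifies the two halves of $C$ with $C_1:=\conv(K\cup(\lambda K+x))$ (the part $s\in[0,1-\lambda]$) and $C_2:=\conv(\{p\}\cup(\lambda K+x))$ (the part $s\in[1-\lambda,1]$). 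Since $C_2$ is a $\lambda$-homothetic image of $C$ centred at $p$, we have $\vol_n(C_2)=\lambda^n\vol_n(C)$. The crucial equality $C_1\cap C_2=\lambda K+x$ I would verify ray-by-ray: on any ray from $p$ that meets $K$ in the segment between ray-parameters $\tau_{in}\le\tau_{out}$, the three sets cut out the intervals $[\lambda\tau_{in},\tau_{out}]$, $[0,\lambda\tau_{out}]$, and $[\lambda\tau_{in},\lambda\tau_{out}]$, whose intersection is manifestly the third. Inclusion-exclusion on $C=C_1\cup C_2$ then rearranges to the target identity (the degenerate case $p\in K$ being handled by the trivial $G_{K,\lambda}(x)=\vol_n(K)$).

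With the identity in hand the theorem is a few lines. Writing $V(p):=\vol_n(\conv(K\cup\{p\}))$, the hypothesis forces $V(p)$ to depend only on $\|p\|_L$, by the $o$-symmetry of $L$ and the homogeneity of the norm. Since $o\in\inter(K)$, the function $V$ is non-decreasing along every ray from $o$ (for $p_1,p_2$ on a common ray with $\|p_1\|\le\|p_2\|$, writing $p_1$ as a convex combination of $p_2$ and $o\in K$ gives $\conv(K\cup\{p_1\})\subseteq\conv(K\cup\{p_2\})$), so $V$ is a non-decreasing function of $\|p\|_L$ alone. Therefore every illumination body $K^\delta=\{p:V(p)\le\vol_n(K)+\delta\}$ is a sublevel set of $\|\cdot\|_L$, i.e., a positive dilate of $L$. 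Taking $\delta=0$ and noting that $V(p)=\vol_n(K)$ exactly when $p\in K$ yields $K=K^0=r_0 L$ for some $r_0>0$, which is the desired homothety. The second assertion, that rotational symmetry of $\vol_n\conv(K\cup\lambda(K+x))$ forces $K$ to be a Euclidean ball, is the specialisation in which $\|\cdot\|_L$ is the Euclidean norm.

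The main technical obstacle is the ray-by-ray verification of $C_1\cap C_2=\lambda K+x$: a point in the intersection need not a priori come from the same base-point $k\in K$ in the two parametrizations, but restricting to a single ray from $p$ reduces the claim to an elementary one-dimensional statement. After that, the remainder of the argument is essentially just the unpacking of Definition~\ref{defn:illuminationbodies}.
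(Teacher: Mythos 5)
Your proof is correct, and the core idea coincides with the paper's short proof: a convex function depending only on $\|\cdot\|_L$ has all its sublevel sets equal to dilates of $L$ (via monotonicity along rays), so the \emph{minimal} sublevel set is a dilate of $L$, and the minimal sublevel set is (a dilate of) $K$. However, you apply this idea to $V(p)=\vol_n\conv(K\cup\{p\})=G_{K,0}(p)$ after first proving the identity
\[
G_{K,\lambda}(x) \;=\; (1-\lambda^n)\,G_{K,0}\bigl(x/(1-\lambda)\bigr) + \lambda^n\vol_n(K),
\]
whereas the paper's own short proof argues \emph{directly} with $G_{K,\lambda}$: it observes that $G_{K,\lambda}(t)\ge\vol_n(K)$ with equality exactly when $\lambda K+t\subset K$, i.e.\ when $t\in(1-\lambda)K$, so $(1-\lambda)K$ is already the minimal sublevel set and must be a dilate of $L$. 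The identity you prove is exactly the content of the paper's Lemma~\ref{lem:lambdazero}, which the authors present as the key tool in Jer\'onimo-Castro's original argument, but which their own proof deliberately bypasses; your derivation of it (decomposing $\conv(K\cup\{p\})$ into $\conv(K\cup(\lambda K+x))$ and $\conv(\{p\}\cup(\lambda K+x))$, which meet exactly in $\lambda K+x$) is a ray-by-ray version of what the paper does with set differences, and it is correct, though the claim $C_1\cap\mathrm{ray}=[\lambda\tau_{in},\tau_{out}]$ requires an extra observation you do not spell out (namely that for every outer normal $u$ of $K$ at the entry point, $\langle p,u\rangle>h_K(u)$ because $p$ precedes the entry point along the ray). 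So the reduction step costs you real work that the paper avoids. What your route buys in return is a slightly more structural picture: it makes transparent that the statement for each fixed $\lambda\in[0,1)$ is literally equivalent to the $\lambda=0$ statement about illumination bodies, which the paper states separately as Lemma~\ref{lem:lambdazero} and only uses later, in the proof of Theorem~\ref{thm:3d}. Also note that the paper's argument does not use $o$-symmetry of $L$ at all (only $o\in\inter L$), and neither does yours really, so that hypothesis is superfluous in both.
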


We formulate an important tool in the proof of Theorem~\ref{thm:castro2} as Lemma~\ref{lem:lambdazero}, and give a slightly shorter proof of it than the one in \cite{castro}. This lemma establishes a connection between sublevel sets of $\lambda$-homothetic convex hull functions and illumination bodies. We use this lemma in the proof of Theorem~\ref{thm:3d}.

\begin{lemma}\label{lem:lambdazero}
Let $n \geq 2$, and $K \subset \R^n$ be a convex body with $o \in \inter(K)$. If for some $0 \leq \lambda < 1$, $L$ is a sublevel set of $G_{\lambda,K}$, then $\frac{1}{1-\lambda} L$ is an illumination body of $K$; i.e. it is a sublevel set of $G_{K,0}$, and vice versa.
\end{lemma}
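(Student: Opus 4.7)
The plan is to establish the pointwise identity
\[
G_{K,\lambda}(t) = (1-\lambda^n)\, G_{K,0}\!\left(\tfrac{t}{1-\lambda}\right) + \lambda^n \vol_n(K)
\]
for every $t \in \R^n$. Once this is proved, both directions of the lemma follow at once: since the right-hand side is an affine strictly increasing function of $G_{K,0}(t/(1-\lambda))$, the set $\{t : G_{K,\lambda}(t) \leq \gamma\}$ coincides with $(1-\lambda)\{c : G_{K,0}(c) \leq \gamma'\}$, where $\gamma' = (\gamma - \lambda^n\vol_n(K))/(1-\lambda^n)$, and the latter set is by definition a sublevel set of $G_{K,0}$, i.e.\ an illumination body of $K$.

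To prove the identity, I would set $c = t/(1-\lambda)$, so that $x \mapsto c + \lambda(x-c)$ is the homothety with centre $c$ and ratio $\lambda$ carrying $K$ onto $\lambda K + t$. The case $c \in K$ is trivial, because then $\lambda K + t \subset K$ and both sides equal $\vol_n(K)$, so I may assume $c \notin K$. The key geometric step is the decomposition $\conv(K \cup \{c\}) = F_1 \cup F_2$, where
\[
F_1 = \{(1-\sigma)c + \sigma k : \sigma \in [\lambda,1],\ k \in K\}, \qquad F_2 = \{(1-\sigma)c + \sigma k : \sigma \in [0,\lambda],\ k \in K\}.
\]
Both $F_1$ and $F_2$ are convex by a direct regrouping of the defining convex combination, and one identifies $F_1 = \conv(K \cup (\lambda K + t))$ and $F_2 = \conv(\{c\} \cup (\lambda K + t))$ by writing a general $(1-\sigma)c + \sigma k$ explicitly as a convex combination of a point of $K$ and a point of $\lambda K + t$ (respectively of $\{c\}$ and of $\lambda K + t$). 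In particular $F_2$ is the homothetic image of $\conv(K \cup \{c\})$ under the homothety centred at $c$ with ratio $\lambda$, so $\vol_n(F_2) = \lambda^n G_{K,0}(c)$.

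The main technical point, and the only genuine obstacle, is to verify that $F_1 \cap F_2 = \lambda K + t$. I would do this by slicing along rays from $c$: for a unit vector $v$ such that the ray $\{c + \beta v : \beta \geq 0\}$ meets $K$, let $[\alpha_-(v), \alpha_+(v)]$ be the interval of $\beta$-values for which $c + \beta v \in K$. A short calculation then shows that $c + \beta v$ lies in some slice at level $\sigma \in [\lambda,1]$ precisely when $\beta \in [\lambda\alpha_-(v), \alpha_+(v)]$, in some slice at level $\sigma \in [0,\lambda]$ precisely when $\beta \in [0, \lambda\alpha_+(v)]$, and belongs to $\lambda K + t$ precisely when $\beta \in [\lambda\alpha_-(v), \lambda\alpha_+(v)]$. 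Since the intersection of the first two intervals equals the third, the desired equality holds on every ray from $c$, and hence everywhere.

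Finally, inclusion--exclusion combined with the preceding computations yields
\[
G_{K,0}(c) = \vol_n(F_1) + \vol_n(F_2) - \vol_n(F_1 \cap F_2) = G_{K,\lambda}(t) + \lambda^n G_{K,0}(c) - \lambda^n \vol_n(K),
\]
which rearranges to the claimed identity and completes the proof.
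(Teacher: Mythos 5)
Your proof is correct and follows essentially the same approach as the paper: both derive the identity $G_{K,\lambda}(t) = (1-\lambda^n)\, G_{K,0}\bigl(\tfrac{t}{1-\lambda}\bigr) + \lambda^n \vol_n(K)$ by decomposing $\conv(K\cup\{c\})$ using the homothety centred at $c = t/(1-\lambda)$ with ratio $\lambda$, and then read the sublevel-set statement off this affine relation. The only difference is presentational: the paper treats the set decomposition $K' = \conv(K\cup\{c\})\setminus\conv((t+\lambda K)\cup\{c\})$ as clear, whereas you supply the explicit ray-slicing verification that $F_1\cap F_2 = \lambda K + t$, which fills in that gap rigorously.
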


\begin{proof}
Consider some point $t \in L$. An elementary computation shows that the center of homothety $\chi$ that maps $K$ into $t + \lambda K$ is $\frac{t}{1-\lambda}$.
Let $K'= \conv (K \cup (t+\lambda K)) \setminus (t+\lambda K)$. Then $\vol_n(K') = G_{\lambda,K}(t) - \lambda^n \vol_n(K)$. On the other hand, we clearly have $K' = \conv \left(K \cup \left\{ \frac{t}{1-\lambda} \right\} \right) \setminus \conv \left( (t+\lambda K) \cup \left\{ \frac{t}{1-\lambda} \right\} \right)$, which implies
$\vol_n(K') = (1-\lambda^n) G_{0,K}\left( \frac{t}{1-\lambda} \right)$. Thus,
\[
G_{0,K}\left( \frac{t}{1-\lambda} \right) = \frac{G_{\lambda,K}(t) - \lambda^n \vol_n(K)}{1-\lambda^n},
\]
which yields that $\frac{1}{1-\lambda} L$ is a sublevel set of $G_{0,K}$.
\end{proof}

Unfortunately, the witty proof of Theorem~\ref{thm:castro2} cannot be applied to settle Conjecture \ref{conj:translconstvol}. However, this result suggested the following problem, which appeared in \cite{gho-volumefunction}:

\begin{problem}\label{prob:homconvhull}
Let $0 \leq \lambda < 1$ be arbitrary. Is it true that the $\lambda$-homothetic convex hull function $G_{K,\lambda}$ of a convex body $K$ determines $K$; that is $G_{K,\lambda} = G_{L,\lambda}$ implies $K=L$?
\end{problem}

First, note that by Theorem~\ref{thm:characterization_ch} and also by \cite[Theorem 6]{gho-volumefunction}, the convex hull function of a convex body does \emph{not} determine the body. Our first result is an affirmative answer to Problem~\ref{prob:homconvhull}. Our argument yields also a short proof of Theorem~\ref{thm:castro2}, different from the one in \cite{castro}. Nevertheless, we will use the argument in \cite{castro} in the proof of Lemma~\ref{lem:lambdazero}.
Before stating our result, we remark that any convex body $L \subset \R^n$ containing $o$ in its interior is the unit ball of an asymmetric norm \cite{cobzas}, which we denote by $|| \cdot ||_L$.

\begin{theorem}
Let $0 \leq \lambda < 1$ and $n \geq 2$. Then the following holds.
\begin{itemize}
\item[(i)] If $K,L \subset \R^n$ are convex bodies satisfying $G_{K,\lambda} = G_{L,\lambda}$, then $K=L$.
\item[(ii)] If $K, L \subset \R^n$ are convex bodies and $G_{K,\lambda}(t)$ depends only on the $L$-norm $||t||_L$ of $t$, then $K$ is a homothetic copy of $L$.
\end{itemize}
\end{theorem}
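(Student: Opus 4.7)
My plan is to pin down the set on which $G_{K,\lambda}$ attains its minimum value and to read both statements off from this single observation. The starting point is that $G_{K,\lambda}(t) \geq \vol_n(K)$, with equality exactly when $\lambda K + t \subset K$, i.e.\ exactly when $t$ lies in the Minkowski difference $K \ominus \lambda K := \{s : \lambda K + s \subset K\}$. Since $o \in K$ and convexity force $\lambda K \subset K$, in particular $G_{K,\lambda}(0) = \vol_n(K)$.

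The one technical step I need is the identity
\[
K \ominus \lambda K \;=\; (1-\lambda)\, K,
\]
valid for every convex body $K$ and every $\lambda \in [0,1]$. I would prove it via support functions: $\lambda K + t \subset K$ is equivalent to $\lambda h_K(u) + \langle u, t \rangle \leq h_K(u)$ for every $u \in \Sph^{n-1}$, which rewrites as $\langle u, t \rangle \leq (1-\lambda) h_K(u) = h_{(1-\lambda)K}(u)$, i.e.\ $t \in (1-\lambda) K$. Combined with the previous paragraph, this shows that the minimum level set of $G_{K,\lambda}$ is precisely the full-dimensional convex body $(1-\lambda) K$, which contains $o$ in its interior.

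With that in hand, (i) is immediate: evaluating the equality $G_{K,\lambda} = G_{L,\lambda}$ at the origin yields $\vol_n(K) = \vol_n(L)$, so the two minimum level sets agree, $(1-\lambda) K = (1-\lambda) L$, and $\lambda < 1$ gives $K = L$. For (ii), the hypothesis $G_{K,\lambda}(t) = f(\|t\|_L)$ forces every level set of $G_{K,\lambda}$ to be $\|\cdot\|_L$-invariant, i.e.\ a union of $L$-norm spheres centred at $o$; combined with convexity and $o \in \inter((1-\lambda)K)$, the minimum level set must be an $L$-norm ball $r_0 L$ with $r_0 > 0$. Equating this with $(1-\lambda) K$ gives $K = \tfrac{r_0}{1-\lambda}\,L$, a positive homothet of $L$ centred at $o$, which is in particular a homothetic copy of $L$.

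The only conceptually non-routine step is the identity $K \ominus \lambda K = (1-\lambda) K$: the inclusion $(1-\lambda) K \subset K \ominus \lambda K$ is immediate from convexity and $o \in K$, but the reverse inclusion really needs the short support-function computation above. Everything else, including the promised alternative proof of Theorem~\ref{thm:castro2} (just specialise (ii) to $o$-symmetric $L$), then follows at once.
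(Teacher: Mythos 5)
Your proposal is correct and follows essentially the same route as the paper: both arguments hinge on identifying the minimum level set of $G_{K,\lambda}$ as $(1-\lambda)K$ (you spell out the Minkowski-difference identity $K \ominus \lambda K = (1-\lambda)K$ via support functions, where the paper simply says ``by convexity''), and then read off (i) from the equality of those sets and (ii) from the observation that this set must be an $L$-ball when $G_{K,\lambda}$ is a function of $\|\cdot\|_L$.
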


We note that the case that $L$ is a Euclidean ball in (ii) yields that if $G_{K,\lambda}(t)$ is rotationally symmetric, then $K$ is a ball.

\begin{proof}
Let $K \subset \R^n$ be an arbitrary convex body, and $0 \leq \lambda < 1$. By its definition, $G_{K,\lambda} (t) \geq \vol_n (K)$, with equality if and only if
$\lambda K + t \subset K$. On the other hand, by convexity, $\lambda K + t \subset K$ is equivalent to $t \in (1-\lambda) K$. Thus, $G_{K,\lambda}$ is minimal exactly at the points of $(1-\lambda) K$, and this implies that
if $G_{K,\lambda}(t) = G_{L,\lambda}(t)$ for all $t \in \R^n$, then $(1-\lambda) K = (1-\lambda)L$, which, since $0 \leq \lambda < 1$, implies $K=L$.

Assume now that $G_{K,\lambda}(t)$ depends only on the $L$-norm $||t||_L$ of $t$ for some convex body $L$ with $o \in \inter L$. This implies that any level set of $G_{K,\lambda}$ is a homothetic copy of $\bd(L)$. Since $G_{K,\lambda}(\mu t) \leq G_{K,\lambda}(t)$ for any $0 \leq \mu \leq 1$, from this it follows that all sublevel sets of $G_{K,\lambda}$ are homothetic copies of $L$. In particular, this implies that $(1-\lambda)K=\{ t : G_{K,\lambda}(t) \leq \vol_n(K) \}$ is a homothetic copy of $L$, which implies (ii).
\end{proof}

It is worth remarking that by the result of F\'ary and R\'edei \cite{fary-redei} mentioned in the introduction, the function $G_{K,\lambda}(t)$ is a convex function of $t$, which yields that all its sublevel sets are convex bodies.

In \cite{castro}, Jer\'onimo-Castro also proved that if $K \subset \R^n$ is a convex body of class $C^2_+$, containing $o$ in its interior, and for some $0 < \lambda <1$, $G_{K,\lambda}(t)$ depends only on the $K$-norm $||t||_K$ of $t$, then $K$ is an ellipsoid. Observe that the conditions in Jer\'onimo-Castro's theorem imply that all sublevel sets of
$G_{K,\lambda}$ are positive homothetic copies of $K$. On the other hand, clearly, all sublevel sets of $G_K$ (apart from $\{ o \}$) are positive homothetic copies of $K$ if and only if there is a sublevel set of $G_K$ which is a positive homothetic copy of $K$. This leads to the following question.

\begin{question}\label{ques:homothetic}
Let $n \geq 2$ and $0 \leq \lambda < 1$. Determine the convex bodies $K \subset \R^n$, containing $o$ in their interiors, with the property that for some $\mu > 0$, $\mu K$ is a sublevel set of $G_{K,\lambda}$, or equivalently, with the property that one of the illumination bodies of $K$ is a positive homothetic copy of $K$.
\end{question}

We note that Question~\ref{ques:homothetic} stated for illumination bodies of a convex body, already appeared in the literature as part 1 of the so-called \emph{generalized homothety conjecture} of Werner and Ye in \cite{werner4}. As partial results in this direction, we mention a result of Stancu \cite{stancu}, proving that if $K$ has a $C^2_{+}$ boundary, and there is some $\delta_0 > 0$ such that for any $0 < \delta < \delta_0$, $K^{\delta}$ is homothetic to $K$, then $K$ is an ellipsoid, and also a result of Jer\'onimo-Castro who proved a similar result in the planar case using weaker boundary conditions for $K$.

Motivated by the result of Martini in \cite{martini-polarprojection} and also by the results of Werner \cite{werner2} and Mordhost and Werner \cite{mordhost} about the illumination bodies of convex polytopes, we investigate Question~\ref{ques:homothetic} for convex polytopes. Our main result is the following.

\begin{theorem}\label{thm:3d}
There is no convex polytope $P \subset \R^3$, with $o \in \inter(P)$, such that for some $0 \leq \lambda < 1$ and $\mu > 0$, $\mu P$ is a sublevel set of $G_{P,\lambda}$. Equivalently, there is no convex polytope $P \subset \R^3$ such that an illumination body of $P$ is a positive homothetic copy of $P$. \end{theorem}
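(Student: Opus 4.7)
The plan is to apply Lemma~\ref{lem:lambdazero} to reduce the claim to its illumination-body form: no convex polytope $P \subset \R^3$ admits an illumination body $P^\delta = \nu P$ with $\delta > 0$ (then necessarily $\nu > 1$, since $P \subsetneq P^\delta$). I argue by contradiction: assuming such $P, \delta, \nu$ exist, I produce a facet of $P^\delta$ whose outer normal is not a facet normal of $\nu P$.

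The core computation is a piecewise linear formula for $V(x) := \vol_3\conv(P \cup \{x\}) - \vol_3(P)$, $x \notin P$. Decomposing $\conv(P\cup\{x\})\setminus P$ into the closed pyramids $T_F := \conv(\{x\}\cup F)$ over the facets $F$ of $P$ visible from $x$ (those with $\langle x,u_F\rangle > h_P(u_F)$), which have pairwise disjoint interiors because each point of the complement lies on a unique segment from $x$ to $\bd P$, yields
$$V(x) = \sum_{F \text{ visible from } x} \tfrac{1}{3}\,\vol_2(F)\bigl(\langle x,u_F\rangle - h_P(u_F)\bigr).$$
Hence $V$ is piecewise affine on $\R^3\setminus P$ with linearity cells $V_S$ indexed by the set $S$ of visible facets, and the polytope $P^\delta$ has a facet inside each cell where $\{V=\delta\}$ has positive $2$-dimensional area. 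For any edge $e$ of $P$ with adjacent facets $F_i, F_j$, I will verify that the cell $V_{\{F_i,F_j\}}$ has non-empty interior: for $p\in \relint e$ and small $t>0$, the point $p + t(u_{F_i}+u_{F_j})$ lies in it, using $\langle u_{F_i},u_{F_j}\rangle>-1$. Consequently $P^\delta$ carries a genuine $2$-dimensional facet in $V_{\{F_i,F_j\}}$ whose outer normal lies in the direction of $w_{ij} := \vol_2(F_i)\,u_{F_i} + \vol_2(F_j)\,u_{F_j}$.

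The main obstacle is to show that the direction of $w_{ij}$ is not parallel to any facet normal of $P$, which then gives $P^\delta$ a facet absent from $\nu P$ and completes the contradiction. Since $w_{ij}$ is a strict positive combination of $u_{F_i}$ and $u_{F_j}$, its normalization lies strictly in the relative interior of the geodesic arc from $u_{F_i}$ to $u_{F_j}$ on $\Sph^2$, so it differs from both $u_{F_i}$ and $u_{F_j}$. If it coincided with $u_{F_k}$ for some other facet $F_k$, then $u_{F_k}$ would lie in the relative interior of the $2$-dimensional normal cone of $e$; equivalently, the supporting hyperplane of $P$ with outer normal $u_{F_k}$ would meet $P$ along both $F_k$ and $e$, forcing $e\subset F_k$ and contradicting the fact that each edge of a $3$-dimensional convex polytope lies in exactly two facets. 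This finishes the contradiction and proves the theorem.
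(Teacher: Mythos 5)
Your reduction via Lemma~\ref{lem:lambdazero} is correct, the volume decomposition formula for $V(x)$ is correct, and the final geometric step (that $w_{ij}=\vol_2(F_i)u_{F_i}+\vol_2(F_j)u_{F_j}$ cannot be a positive multiple of any facet normal of $P$, because that would force $u_{F_k}$ into the relative interior of the normal cone of the edge $e$) is also sound. The approach is genuinely different from the paper's, which proceeds combinatorially: it shows via Lemmas~\ref{lem:piecewise} and \ref{lem:polygon} that each facet plane $H$ of $P$ meets $\bd(\mu P)$ in a polygon with the same number of sides as $F = H\cap P$, and then derives a contradiction from Euler's formula by producing a face of $P$ with fewer than six edges whose image in $\mu P$ would need at least six vertices.

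However, there is a real gap in the key step. You verify that the linearity cell $V_{\{F_i,F_j\}}$ of the piecewise-affine function $V$ has non-empty interior, and then write \emph{``Consequently $P^\delta$ carries a genuine $2$-dimensional facet in $V_{\{F_i,F_j\}}$.''} That implication does not hold: the level set $\{V=\delta\}$ passes through a given cell with positive $2$-dimensional area only if $\delta$ lies strictly below $\sup_{V_{\{F_i,F_j\}}} V$, which is finite whenever the cell is bounded. The cell $V_{\{F_i,F_j\}}$ is indeed bounded for many polytopes: it is unbounded exactly when there is a direction $u$ with $\langle u,u_{F_i}\rangle > 0$, $\langle u,u_{F_j}\rangle > 0$ and $\langle u,u_{F_k}\rangle \le 0$ for all other facets $F_k$, and already for a fine polyhedral approximation of a ball (where the facet normals densely fill $\Sph^2$) no such $u$ exists for any edge, so every cell $V_{\{F_i,F_j\}}$ is bounded. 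In that situation, for $\delta$ large the surface $\bd(P^\delta)$ never enters $V_{\{F_i,F_j\}}$, and your argument produces no forbidden facet normal. Since the theorem must rule out $P^\delta = \nu P$ for \emph{every} $\delta>0$, not just small ones, this gap is fatal to the proof as written. To repair it you would need to exhibit, for \emph{every} $\delta>0$, some cell $V_S$ that $\bd(P^\delta)$ crosses with positive area and for which $w_S = \sum_{F\in S}\vol_2(F)\,u_F$ is not a positive multiple of any facet normal of $P$; this is substantially harder, since for $|S|\ge 2$ containing non-adjacent facets the normal-cone argument no longer applies and $w_S$ can in principle be parallel to some $u_{F_k}$.
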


We prove Theorem~\ref{thm:3d} for illumination bodies, as stated in the second form. To prove it, we start with some lemmas.

\begin{lemma}\label{lem:piecewise}
Let $n \geq 2$ and let $P \subset \R^n$ be a convex polytope and let $\delta > 0$ be arbitrary.
Then $P^{\delta}$ is a convex polytope, and:
\begin{itemize}
\item[(i)] The $(n-2)$-skeleton of $P^{\delta}$ is contained in the union of all facet hyperplanes of $P$.
\item[(ii)] For any facet hyperplane $H$ of $P$, $H \cap \bd ( P^{\delta})$ is contained in the $(n-2)$-skeleton of $P^{\delta}$.
\end{itemize}
\end{lemma}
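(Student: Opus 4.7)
The approach is to realize $P^\delta$ as the sublevel set $\{V\le\delta\}$ of the convex function
\[
V(x):=\vol_n\conv(P\cup\{x\})-\vol_n(P)=\frac1n\sum_{F\in\mathcal{F}(P)}\vol_{n-1}(F)\,\max\{0,h_F(x)\},
\]
where $\mathcal{F}(P)$ is the set of facets of $P$, $u_F$ is the outer unit normal of $F$, $y_F\in F$, and $h_F(x):=\langle x-y_F,u_F\rangle$ is the signed distance to the facet hyperplane $H_F$ (positive on the outer side). This identity follows from the disjoint-pyramid decomposition of $\conv(P\cup\{x\})\setminus P$ into the pyramids $\conv(F\cup\{x\})$ over the facets $F$ visible from $x$, i.e.\ those with $h_F(x)>0$. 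Since each summand is convex and piecewise affine with a single kink on $H_F$, so is $V$: it is affine on each closed cell of the hyperplane arrangement $\{H_F : F\in\mathcal{F}(P)\}$ and tends to $+\infty$ at infinity. Hence $P^\delta$ is a bounded convex polytope, every facet of which is contained in a hyperplane of the form $\{V_\mathcal{V}=\delta\}$, where $V_\mathcal{V}:=\frac1n\sum_{F\in\mathcal V}\vol_{n-1}(F)\,h_F$ for some nonempty $\mathcal V\subseteq\mathcal{F}(P)$.

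\textbf{Part (i)} is immediate: if $x$ lies in the relative interior of a facet of $P^\delta$, then a relative neighborhood of $x$ in $\bd(P^\delta)$ is contained in a single hyperplane and $V$ is affine in a full neighborhood of $x$ on one side, which forces $x$ into the open interior of a single cell of $\{H_F\}$; thus $x\notin\bigcup_F H_F$.

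\textbf{Part (ii), the main step.} Fix $H=H_F$ and $x\in H\cap\bd(P^\delta)$, and suppose toward contradiction that $x$ lies in the relative interior of a facet $F^*$ of $P^\delta$ with affine hull $\Pi$. If $\Pi=H$, the supporting hyperplane property forces $P^\delta$ into one closed half-space of $H$; since $P\subset P^\delta$ and $P\subset\{h_F\le 0\}$, this half-space must be $\{h_F\le 0\}$. But for any $p\in\relint(F)$ and $0<t\le n\delta/\vol_{n-1}(F)$, only $F$ is visible from $p+tu_F$, so $V(p+tu_F)=\frac1n\vol_{n-1}(F)\,t\le\delta$; this yields $p+tu_F\in P^\delta$ with $h_F(p+tu_F)=t>0$, a contradiction. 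If $\Pi\ne H$, I first move $x$ slightly inside the nonempty open subset $\relint(F^*)\cap H$ of the $(n-2)$-dimensional affine set $\Pi\cap H$ to avoid the finitely many lower-dimensional affine subspaces $\Pi\cap H_G$ (for $G\ne F$) that pass through $x$ but differ from $\Pi\cap H$. After this perturbation, in a small ball $B$ around $x$ the only kinks of $V\big|_\Pi$ occur along $\Pi\cap H$, contributed by $F$ and by the facets $G\in\mathcal G:=\{G\ne F:\Pi\cap H_G=\Pi\cap H\}$. For each such $G$, the affine forms $h_G|_\Pi$ and $h_F|_\Pi$ vanish on the same codimension-$1$ subspace of $\Pi$, so $h_G|_\Pi=c_G\,h_F|_\Pi$ for a unique $c_G\ne 0$. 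Writing $A$ for the affine function on $\Pi\cap B$ that sums the contributions of all facets outside $\mathcal G\cup\{F\}$, one has
\[
V|_\Pi=A+\tfrac{h_F|_\Pi}{n}\,\alpha\ \text{on the side } h_F\ge 0,\qquad V|_\Pi=A+\tfrac{h_F|_\Pi}{n}\,\beta\ \text{on the side } h_F\le 0,
\]
with $\alpha=\vol_{n-1}(F)+\sum_{c_G>0}\vol_{n-1}(G)c_G>0$ and $\beta=\sum_{c_G<0}\vol_{n-1}(G)c_G\le 0$. Since $F^*\cap B$ is open in $\Pi$ and $V|_\Pi\equiv\delta$ there, each of these affine identities holds on an open subset of $\Pi$, hence identically; subtracting gives $\tfrac{h_F|_\Pi}{n}(\alpha-\beta)\equiv 0$. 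But $\alpha>\beta$ and $h_F|_\Pi$ is a nonconstant linear form (as $\Pi\ne H$), contradiction.

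\textbf{Main obstacle.} The delicate point is the $\Pi\ne H$ subcase of (ii): several facet hyperplanes of $P$ may meet $\Pi$ along the same $(n-2)$-dimensional subspace as $H$ does, causing multiple kinks of $V|_\Pi$ to collapse onto $\Pi\cap H$, and this has to be handled without assuming $F$ is the unique cause of a kink on $\Pi$. What rescues the argument is the strict sign inequality $\alpha>\beta$, which survives all such collisions precisely because the contribution $\vol_{n-1}(F)>0$ of $F$ itself appears in $\alpha$ alone.
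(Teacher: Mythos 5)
Your proof is correct and rests on the same underlying idea as the paper's: $G_{0,P}$ is a convex piecewise-linear function whose kink locus is exactly the union of the facet hyperplanes of $P$, so its sublevel sets are polytopes and the statements about their skeleta reduce to local analysis of the kinks. The paper realizes this via a simplicial decomposition of $\bd P$ and signed determinants, while you use the cleaner facet-pyramid decomposition $V=\tfrac1n\sum_F \vol_{n-1}(F)\max\{0,h_F\}$; these are equivalent after summing the simplices lying in a common facet, but your form makes the location and sign of each kink transparent. Where you genuinely add value is in Part (ii): the paper's argument (``as $t$ crosses $H$, all simplices in $H$ are added to or removed from $\mathcal S_t$, hence $t_0$ is nonsmooth'') leaves implicit why no cancellation can occur when several facet hyperplanes meet the affine hull $\Pi$ of a facet of $P^\delta$ along the same $(n-2)$-flat; your perturbation step plus the inequality $\alpha>\beta$ (driven by the unpaired contribution $\vol_{n-1}(F)$ of $F$ itself) closes that loophole rigorously, and the $\Pi=H$ subcase is handled cleanly as well.

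One small organizational remark: the sentence you offer under ``Part (i)'' actually proves the direction $x\in\relint(\text{facet of }P^\delta)\Rightarrow x\notin\bigcup_F H_F$, which is really (a version of) Part (ii); and as you observe yourself, that inference needs the $\Pi=H$ analysis to be watertight, since $V$ may still be differentiable at a kink point of $\partial V$ when the two one-sided gradients happen to be parallel to $u_F$. Part (i) as literally stated is the easier implication---$x\notin\bigcup_F H_F$ means $x$ lies in the interior of a cell, $V$ is affine there, so $\partial V(x)$ is a singleton and $x$ is a smooth boundary point---and follows immediately from the cell decomposition you already set up. The mathematics is all present; just relabel so Part (i) gets its (trivial) proof and the kink-persistence argument is attributed entirely to Part (ii).
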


\begin{proof}
Let $\mathcal{S}$ be a decomposition of $\bd(P)$ into $(n-1)$-dimensional simplices whose vertices are vertices of $P$. Note that for any simplex with vertices $p_1, \ldots, p_n$ in this decomposition the volume of $\conv \{ p_1, \ldots, p_n, t \}$ is the absolute value of the determinant with column vectors $p_2-p_1, \ldots, p_n-p_1, t-p_1$. Thus, if for any $t \in \R^n$, $\mathcal{S}_t$ denotes the subfamily of $\mathcal{S}$ consisting of the elements $F$ of $\mathcal{S}$ such that the closed supporting half space of $P$ whose boundary contains $F$ contains $t$, we have

\begin{equation}\label{eq:volume}
G_{0,P}(t) = \sum_{\conv \{ p_1,p_2,\ldots, p_n \} \in \mathcal{S}_t } \left| \det [p_2-p_1,p_3-p_1,\ldots, p_n-p_1,t-p_1] \right|,
\end{equation}
Thus, by the properties of determinants, $G_{0,P}$ is a piecewise linear convex function, implying that the illumination bodies $P^{\delta}$ are convex polytopes. Furthermore, any nonsmooth point of a level hypersurface of $G_{0,P}$ (i.e. any point in the $(n-2)$-skeleton of the corresponding illumination body $P^{\delta}$) lies in the affine hull of a simplex in $\mathcal{S}$. Thus, the $(n-2)$-skeleton of $P^{\delta}$ is contained in the union of the facet hyperplanes of $P$. On the other hand, if $t \in \bd (P^{\delta})$ moves in such a way that it crosses a facet hyperplane $H$ of $P$ at a point $t_0 \in \bd (\mu P)$, then $\mathcal{S}_t$ changes in such a way that the simplices in $H$ all become elements of $\mathcal{S}_t$ or they are all are removed from it, depending on the direction in which $t$ crosses $H$. Hence, it follows that $t_0$ is a nonsmooth point of $\bd (P^{\delta})$, implying that it belongs to the $(n-2)$-skeleton of $P^{\delta}$.
\end{proof}

\begin{lemma}\label{lem:polygon}
If $P \subset \R^2$ is a convex $k$-gon, and $Q$ is a convex $m$-gon such that $P \subset \inter (Q)$, every vertex of $Q$ belongs to a sideline of $P$, and every sideline of $P$ intersects $\bd(Q)$ in two vertices of $Q$, then $m \geq k$, with equality if and only if every vertex of $Q$ lies exactly on two sidelines of $P$.
\end{lemma}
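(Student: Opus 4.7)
My plan is to reduce the desired inequality to a two-way incidence count between the vertices of $Q$ and the sidelines of $P$, with the only nontrivial geometric input being an elementary convexity fact about tangent lines.

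Let $d_v$ denote the number of sidelines of $P$ passing through a vertex $v$ of $Q$. The hypothesis that every vertex of $Q$ belongs to some sideline of $P$ is precisely $d_v \geq 1$. Counting incident (sideline, vertex-of-$Q$-on-it) pairs in two ways, and using the hypothesis that each of the $k$ sidelines of $P$ meets $\bd(Q)$ in exactly two vertices of $Q$, one obtains
\[
\sum_{v} d_v = 2k.
\]

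The decisive step, which I expect to be the heart of the argument, is the upper bound $d_v \leq 2$. This is where the convexity of $P$ together with the strict inclusion $P \subset \inter(Q)$ enter: every vertex $v$ of $Q$ lies in $\bd(Q)$ and hence in the complement of $P$, and each sideline of $P$ through $v$ is a supporting line of the planar convex body $P$ passing through the exterior point $v$. Since from any point in the complement of a planar convex body there are at most two supporting lines of the body (namely, the two tangent lines from that point), at most two sidelines of $P$ can pass through $v$.

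Combining $1 \leq d_v \leq 2$ with $\sum_{v} d_v = 2k$ yields $2m \geq 2k$, that is, $m \geq k$, and the inequality is tight precisely when $d_v = 2$ for every vertex $v$ of $Q$, which is exactly the equality characterization asserted in the lemma. The only point that I want to verify carefully is that $v$ indeed lies in the strict complement of $P$; but this is immediate from $P \subset \inter(Q)$, which forces $\bd(Q) \cap P = \emptyset$, so no genuine obstacle arises.
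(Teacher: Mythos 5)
Your proposal is correct and follows essentially the same double-counting argument as the paper: each of the $k$ sidelines of $P$ contributes exactly two vertices of $Q$, while each vertex of $Q$ lies on at most two sidelines, giving $2k \leq 2m$. The only difference is that you explicitly justify the bound $d_v\leq 2$ via the fact that at most two supporting lines of a planar convex body pass through an exterior point, a step the paper asserts without comment.
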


\begin{proof}
By our conditions, any sideline of $P$ contains exactly two vertices of $Q$. Thus, the number of vertices of $Q$ on the sidelines of $P$, counted with multiplicity, is equal to $2k$. On the other hand, any vertex of $Q$ belongs to at most two sidelines of $P$, which yields that $Q$ has at least $k$ vertices, with equality if and only if every vertex of $Q$ is the intersection point of two sidelines of $P$.
\end{proof}

Now we prove Theorem~\ref{thm:3d}.

\begin{proof}
%By Lemma~\ref{lem:lambdazero}, we may assume that $\lambda = 0$ and $\mu > 1$.
Without loss of generality, suppose for contradiction that $\mu P$ is an illumination body of $P$ with some $\mu > 1$.

Let $F$ be an arbitrary face of $P$, and let $H$ be the plane containing it. Then $H \cap \bd (\mu P)$ is the union of some edges of $\mu P$ by Lemma~\ref{lem:piecewise}. Assume that $F$ is a convex $k$-gon and $Q=H \cap (\mu P)$ is an $m$-gon. Then, by Lemma~\ref{lem:polygon}, $m \geq k$. Thus, let us assign each edge of $F$ to exactly one edge of $Q$ such that distinct edges of $F$ are assigned to distinct edges of $Q$. Since each edge of $P$ lies on exactly two faces of $P$, doing this procedure for all faces of $P$, in this way each edge of $P$ is assigned to exactly two edges of $\mu P$. On the other hand, by convexity, every edge of $\mu P$ belongs to at most two face planes of $P$, and hence, every edge of $\mu P$ is assigned to at most two edges of $P$. Since the numbers of edges of $P$ and $\mu P$ are clearly equal, this yields that every edge of $\mu P$ lies in exactly two face planes of $P$, and for every face plane $H$ of $P$, the number of sides of $P \cap H$ and $H \cap (\mu P)$ are equal, and every other face plane of $P$ intersects $H \cap \bd (\mu P)$ only in vertices.

It readily follows from Euler's formula that every convex polyhedron has a face with strictly less than six edges. Let $F$ be such a face, and let $E$ be an arbitrary edge of $F$. Then, by our conditions, $\mu E$ belongs to exactly two face planes $H_1$ and $H_2$ of $P$. Since the plane $H$ of $F$ separates $P \subset \mu P$ and $\mu E \subset \mu P$, it follows that $H_1$ and $H_2$ intersect the interior of $Q=H \cap (\mu P)$. Let these intersections be $E_1$ and $E_2$, respectively. Then, by our conditions, both $E_1$ and $E_2$ are diagonals of $Q$. Furthermore, since $\mu E$ is parallel to $E$ and $\mu E \subset H_1, H_2$, we have that $E_1$ and $E_2$ are parallel to $E$. Since $H_1 \cap H_2$ is the line through $\mu E$, we have that $E_1$ and $E_2$ are different.
Thus, $E_1$ and $E_2$ are disjoint diagonals of $Q$, implying that $Q$ has at least six vertices. On the other hand, the number of vertices of $Q$ is equal to the number of vertices of $F$, which implies that $F$ has at least six vertices, a contradiction.
\end{proof}

It is a natural question to examine Question~\ref{ques:homothetic} for convex polygons. We raise the following problem.

\begin{problem}\label{prob:affinelyregular}
Prove or disprove that if for some convex polygon $P \subset \R^2$ with $o \in \inter(P)$, $0 \leq \lambda < 1$ and $\mu > 0$, $\mu P$ is a sublevel set of $G_{P,\lambda}$, or equivalently, if an illumination body of $P$ is a positive homothetic copy of $P$, then $P$ is an affinely regular polygon.
\end{problem}

In the remaining part of our paper we give a partial answer to this problem. To be able to state our result, we introduce the following concept.

\begin{defi}
Let $P \subset \R^2$ be a convex $m$-gon. Let the sides of $P$ be $S_1, \ldots, S_m$ in counterclockwise order, where the indices are defined $\mod m$. Let $L_i$ be the sideline of $P$ through $S_i$. Let $p_{i,j}$ denote the intersection point of $L_i$ and $L_j$, if it exists. For any $0 \leq k,l \leq m$, the closed polygonal curve $\bigcup_{j=1}^m [p_{j-k-1,j+l}, p_{j-k,j+l+1}]$ is called the \emph{$(k,l)$-extension} of $P$.
\end{defi}

\begin{theorem}\label{thm:planar}
For any convex $m$-gon $P$ the following are equivalent:
\begin{itemize}
\item[(i)] An illumination body of $P$ is a positive homothetic copy of $P$.
\item[(ii)] A $(k,l)$-extension of $P$ is a positive homothetic copy of $\bd(P)$ containing $P$ in its interior for some $k,l \geq 1$ with $2 | (k+l)$, $k+l+1 < \frac{m}{2}$ such that the side homothetic to $S_i$ is $[p_{i-(k+l)/2-1, i+(k+l)/2},p_{i-(k+l)/2, i+(k+l)/2+1}]$.
\end{itemize}
Furthermore, in this case the $(k,l)$-extension of $P$ is a level curve of $G_{0,P}$ homothetic to $\bd P$,
and if these conditions are satisfied for the $(1,1)$-extension of $P$, then $P$ is an affinely regular polygon.
\end{theorem}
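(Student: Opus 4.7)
The plan is to prove (i)$\Leftrightarrow$(ii) in two steps and then to extract the furthermore clause. For (i)$\Rightarrow$(ii), let $P^{\delta}$ be an illumination body of $P$ that is a positive homothetic copy of $P$, and write $V_i=p_{i-1,i}$ for the vertices of $P$. By Lemma~\ref{lem:piecewise} applied with $n=2$, $P^{\delta}$ is a convex polygon whose vertices lie on sidelines of $P$ and whose edges are not contained in any such sideline; since $P^{\delta}$ and $P$ have the same number of vertices, Lemma~\ref{lem:polygon} forces each vertex of $P^{\delta}$ to lie on \emph{exactly} two sidelines of $P$. Label the vertices $W_1,\ldots,W_m$ in counterclockwise order; as $t$ traverses $\bd(P^{\delta})$ the set of sides of $P$ visible from $t$ changes at each $W_j$ by exactly one addition at the leading end and one deletion at the trailing end, so the two sideline indices through $W_j$ both shift by $+1$ as $j$ increases. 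Hence $W_j=p_{j-k-1,j+l}$ for integer constants $k,l$. Ruling out the cases when a sideline through $W_j$ would coincide with one of the two edges of $P^{\delta}$ meeting at $W_j$ (parallel to $S_{j-1}$ and $S_j$) forces $k,l\geq 1$; the geometric requirement that $p_{j-k-1,j+l}$ lie outside $P$ on the correct side forces $k+l+1<m/2$; and the fact that the positive homothety $T\colon P\to P^{\delta}$ induces an integer cyclic shift $(l-k)/2$ between the vertex labels of $P$ and $P^{\delta}$ forces $k+l$ to be even, with $T(S_i)$ identified as $[p_{i-(k+l)/2-1,i+(k+l)/2},p_{i-(k+l)/2,i+(k+l)/2+1}]$.

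For (ii)$\Rightarrow$(i), let $Q$ be the $(k,l)$-extension and $T(x)=\lambda x+c$ the positive homothety with $T(P)=Q$. It suffices to show $G_{0,P}$ is constant on $\bd(Q)$, for then $Q$ is a sublevel set of $G_{0,P}$, hence an illumination body of $P$, and homothetic to $P$ by hypothesis. On each edge $[W_j,W_{j+1}]\subset\bd(Q)$ no sideline of $P$ is crossed (again by Lemma~\ref{lem:piecewise}), so the set of visible sides is the fixed set $\{S_{j-k},\ldots,S_{j+l}\}$; the identity $G_{0,P}(t)=\vol_2(P)+\tfrac{1}{2}\sum_{i=j-k}^{j+l}|S_i|\,d(t,L_i)$ then makes $G_{0,P}$ linear on the edge with gradient $\tfrac{1}{2}R(V_{j+l+1}-V_{j-k})$, where $R$ denotes rotation by $-\pi/2$ (using $|S_i|\hat n_i=R(V_{i+1}-V_i)$ and telescoping). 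Constancy on the edge is therefore equivalent to $V_{j+l+1}-V_{j-k}$ being parallel to the edge direction, which by (ii) is the direction of $S_{j+(l-k)/2}$; setting $s=(k+l)/2$ and $i=j+(l-k)/2$, this becomes $V_{i+s+1}-V_{i-s}\parallel V_{i+1}-V_i$ for every $i$. To establish it I would parametrize $T(V_i)=V_{i-s}+\alpha_i(V_{i-s}-V_{i-s-1})=V_{i+s+1}+\beta_i(V_{i+s+1}-V_{i+s})$, take the differences $T(V_{i+1})-T(V_i)=\lambda(V_{i+1}-V_i)$ to derive two recurrences expressing $\lambda(V_{i+1}-V_i)$ in terms of consecutive side vectors, and combine them with the subtracted identity $V_{i+s+1}-V_{i-s}=\alpha_i(V_{i-s}-V_{i-s-1})-\beta_i(V_{i+s+1}-V_{i+s})$. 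Since consecutive side vectors of a convex polygon are linearly independent in $\R^2$, the resulting system forces $\alpha_i$ and $\beta_i$ to be constants $\alpha,\beta$ independent of $i$, and the required parallelism follows.

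For the $(1,1)$-case ($s=1$), the system collapses to the single identity $\vec{s}_{i-1}+\vec{s}_{i+1}=c\,\vec{s}_i$ with $\vec{s}_i=V_{i+1}-V_i$ and $c=(1+\lambda)/\lambda$ \emph{constant} in $i$, which is precisely the defining linear recurrence for the side vectors of an affinely regular $m$-gon; identifying $\R^2$ with $\mathbb{C}$, its $m$-periodic non-degenerate solutions are spanned by primitive $m$-th roots of unity, and convexity singles out $c=2\cos(2\pi/m)$, so $P$ is an affine image of a regular $m$-gon. The main technical obstacle is the step in the previous paragraph forcing $\alpha_i$ and $\beta_i$ to be constants: for $s=1$ this is a transparent two-equation linear argument, but for $s\geq 2$ the chord $V_{i+s+1}-V_{i-s}$ is a sum of $2s+1$ side vectors, so one must iterate the two recurrences and exploit that any three vectors in $\R^2$ are linearly dependent to collapse the system to the same two-equation form.
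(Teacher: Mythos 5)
Your proposal is on the right track structurally but has one genuine gap and one unnecessary complication, both concentrated at the same key point: establishing that the homothety sends $S_i$ to the side \emph{centered} on $L_i$, i.e.\ that the cyclic vertex-shift equals $(k-l)/2$.

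\textbf{The gap in (i)$\Rightarrow$(ii).} You correctly use Lemma~\ref{lem:piecewise} and Lemma~\ref{lem:polygon} to conclude that $\Gamma=\bd(P^\delta)$ is a $(k,l)$-extension, and the reasoning for $k,l\geq 1$ and $k+l+1<m/2$ is defensible. But you then write that ``the positive homothety $T$ induces an integer cyclic shift $(l-k)/2$\dots forces $k+l$ to be even.'' This is circular: from $T(V_a)=W_{a+c}$ you only know that $c$ is \emph{some} integer; the claim $c=(k-l)/2$ is exactly the non-trivial content of (ii) and cannot be read off from the homothety. The paper earns this by a two-step parallelism argument: (a) since $T$ maps the chord of $P$ on $L_i$ to a side $S_i'$ of $\Gamma$, $T^{-1}$ maps the chord of $\Gamma$ on $L_i$ (the diagonal $[p_{i-r-1,i},p_{i,i+r+1}]$) to a diagonal of $P$ that must be parallel to $S_i$; and (b) moving a point $x$ along $S_i'$ leaves $\area\conv(P\cup\{x\})$ constant, which forces the diagonal $[p_{i-r-1+j,i-r+j},p_{i+j,i+j+1}]$ of $P$ to also be parallel to $S_i'$. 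These two parallel diagonals of $P$ have endpoints that interleave cyclically (using $r+1<m/2$), hence coincide, giving $2j=r=k+l$. Without this argument (or a substitute) you have not established the parity of $k+l$ or the side-correspondence formula.

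\textbf{The unnecessary machinery in (ii)$\Rightarrow$(i).} Your gradient computation $\nabla G_{0,P}=\tfrac12 R(V_{j+l+1}-V_{j-k})$ on an edge of $Q$ and the reduction to the parallelism $V_{i+s+1}-V_{i-s}\parallel V_{i+1}-V_i$ are both correct. But the $\alpha_i,\beta_i$ recurrence scheme you propose next --- which you yourself flag as problematic for $s\geq 2$ --- is not needed. Hypothesis (ii) already specifies $T(V_a)=W_{a+(k-l)/2}$, and the chord of $Q$ on $L_i$ is $[W_{i-l},W_{i+k+1}]=[p_{i-r-1,i},p_{i,i+r+1}]$, which is a diagonal of $Q$. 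Applying $T^{-1}$ sends it to the diagonal $[V_{i-s},V_{i+s+1}]$ of $P$, and since the former lies on $L_i$ and homotheties preserve direction, the required parallelism follows in one line. This is the same parallelism used in (i)$\Rightarrow$(ii); the paper's phrase ``the converse\dots by reversing this argument'' refers precisely to this. Your recurrence approach is not wrong in spirit, but it replaces a one-line consequence of the hypothesis with a system whose closure you have not verified.

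\textbf{The $(1,1)$ case.} Your identification of the side-vector recurrence $\vec s_{i-1}+\vec s_{i+1}=c\,\vec s_i$ with affine regularity is the same idea as the paper's (which cites the literature for this characterization). However, as stated you have only derived $\vec s_{i-1}+\vec s_{i+1}\parallel\vec s_i$, i.e.\ a proportionality with an a priori $i$-dependent constant $c_i$. The paper obtains the constant $c=(1+2\mu)/\mu$ by two further Intercept-Theorem computations along the lines $L_{i-1}$ and $L_i$; your claim that $c=(1+\lambda)/\lambda$ is ``constant'' inherits the unfinished state of your recurrence argument and needs the same extra geometric input.

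In short: the high-level outline (Lemmas~\ref{lem:piecewise} and \ref{lem:polygon} $\Rightarrow$ $(k,l)$-extension; gradient computation and telescoping; reduction of the $(1,1)$-case to a side-vector recurrence) matches the paper, but the central step --- pinning down the cyclic shift --- is asserted rather than proved in one direction and over-engineered in the other. Adding the two-diagonal parallelism argument would close both at once.
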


\begin{proof}
%By Lemma~\ref{lem:lambdazero}, we may assume that $\lambda = 0$.

Assume that the boundary of an illumination body of $P$, i.e. a level curve $\Gamma$ of $G_{0,P}$, is a positive homotethic copy of $P$. Without loss of generality, we may assume that the center of homothety is $o$. This implies by Lemmas~\ref{lem:piecewise} and \ref{lem:polygon} that every vertex of $\Gamma$ is some point $p_{x,y}$. Let $[p_{x,y},p_{z,w}]$ be the side of $\Gamma$ which is the homothetic copy of $S_i$. Since $[p_{x,y},p_{z,w}]$ does not cross any sideline of $P$, we have that $|x-z|, |y-w| \leq 1$. On the other hand, since $[p_{x,y},p_{z,w}]$ is not contained in any sideline of $P$ by the properties of homothety, we have $x-z=y-w= \pm 1$. This implies that $\Gamma$ is the $(k,l)$-extension of $P$ for some values of $k$ and $l$. The fact that $P \subset \conv \Gamma$ follows from the elementary properties of the function $G_{0,P}$.

In the following, we set $r=k+l$, and denote by $[p_{i-r-1+j, i+j},p_{i-r+j, i+j+1}]$ the side homothetic to $S_i$ for some $1 \leq j \leq r-1$. Since the two vertices of $\Gamma$ in $L_i$ are $p_{i-r-1,i}$ and $p_{i,i+r+1}$, it follows that $L_i$ separates $P$ from exactly $r+1$ sides of $\Gamma$.
As the same holds for $L_{i-r-1}$, $\Gamma$ has more than $2(r+1)$ sides, implying that $k+l+1 = r+1< \frac{m}{2}$. By symmetry, in the remaining part we assume that $j \leq r+1-j$.

Note that the homothety that maps the side $S_i=[p_{i-1,i},p_{i,i+1}]$ of $P$ to the side $S_i'=[p_{i-r-1+j, i+j},p_{i-r+j, i+j+1}]$ of $\mu P$ maps the diagonal $[p_{i-1-j,i-j},p_{i+r-j,i+r+1-j}]$ of $P$ to the diagonal $[p_{i-r-1,i}, p_{i,i+r+1}]$ of $\mu P$.
Thus, it follows that $S_i$ is parallel to $[p_{i-1-j,i-j},p_{i+r-j,i+r+1-j}]$. Furthermore, for any point $x$ in the relative interior of $S_i'$, the segments $[x,p_{i-r-1+j,i-r+j}]$ and $[x,p_{i+j,i+j+1}]$ are contained in $\bd \conv (P \cup \{ x \})$. Since moving $x$ on $S_i'$ does not change $\area \conv (P \cup \{ x \})$, this yields that $S_i'$ (and also $S_i$) is parallel to $[p_{i-r-1+j,i-r+j},p_{i+j,i+j+1}]$, implying that $[p_{i-1-j,i-j},p_{i+r-j,i+r+1-j}]$ is parallel to $[p_{i-r-1+j,i-r+j},p_{i+j,i+j+1}]$. On the other hand, by $r+1 < \frac{m}{2}$, in the cyclic order of vertices of $P$ on $\bd (P)$ the two endpoints of $[p_{i-1-j,i-j},p_{i+r-j,i+r+1-j}]$ separate the endpoints of
$[p_{i-r-1+j,i-r+j},p_{i+j,i+j+1}]$, which yields that the two segments coincide. From this it follows that $i-j-1=i-r-1+j$, and hence, $2j=r=k+l$, $2 | (k+l)$, and $S_i'= [p_{i-(k+l)/2-1, i+(k+l)/2},p_{i-(k+l)/2, i+(k+l)/2+1}]$. This shows that (i) implies (ii). The converse statement can be proved by reversing this argument.

\begin{figure}[h]
\centering
\includegraphics[scale=0.6]{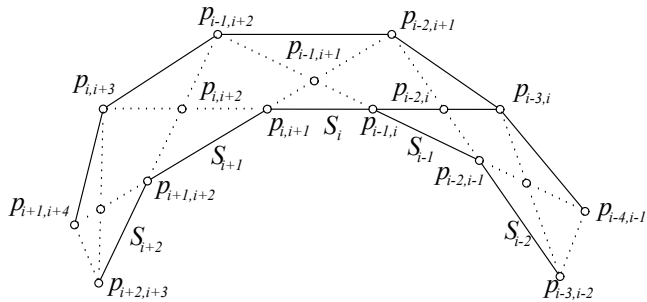}
\caption{The $(1,1)$-extension of a polygon $P$}
\label{fig:1_1}
\end{figure}

It remains to show that if the $(1,1)$-extension of $P$ satisfies the above conditions, then $P$ is affinely regular.
Thus, assume that the $(1,1)$-extension of $P$ is $\bd (\mu P)$ for some $\mu > 1$. Then, by the first part of Theorem~\ref{thm:planar}, we have that $\mu( p_{i,i+1}-p_{i-1,i}) = p_{i-1,i+2}-p_{i-2,i+1}$. This implies that $\conv \{ p_{i-1,i+1}, p_{i-1,i+2},p_{i-2,i+1} \}$ is a homothetic copy of $\conv \{ p_{i-1,i}, p_{i,i+1},p_{i-1,i+1} \}$ with homothety ratio $-\mu$ and center $p_{i-1,i+1}$ (cf. Figure~\ref{fig:1_1}). Thus, in particular, we have $p_{i-1,i+2}-p_{i-1,i+1} = \mu (p_{i-1,i+1}-p_{i-1,i})$. On the other hand, since $[p_{i-1,i+2},p_{i,i+3}]$ is parallel to $S_{i+1}$, the Intercept Theorem yields that $p_{i,i+3}-p_{i,i+1} = \mu(p_{i,i+1}-p_{i-1,i})$. We obtain similarly that $p_{i-1,i}-p_{i-3,i}=\mu (p_{i,i+1}-p_{i-1,i})$. From this, we have that $p_{i,i+3}-p_{i-3,i}= (1+2\mu) (p_{i,i+1}-p_{i-1,i})$, and hence, $p_{i+1,i+2}-p_{i-2,i-1} = \frac{1+2\mu}{\mu} (p_{i,i+1}-p_{i-1,i})$ for all values of $i$. On the other hand, it is known that if the (cyclically ordered) vertices
$q_1, \ldots, q_m$ of a polygon $Q$ satisfy the condition that $q_{i+2}-q_{i-1}= \tau (q_{i+1}-q_i)$ for some $\tau > 0$ independent of $i$, then $Q$ is an affinely regular $m$-gon (see \cite{Coxeter1}, \cite{Coxeter2}, \cite{FischerJamison}, or in a more general form, \cite{Langi_affinely}).
This implies that in this case $P$ is affinely regular.
\end{proof}

We finish the paper with the following question.

\begin{question}
What are the plane convex bodies $K \subset \R^2$ with $o \in \inter(K)$ such that a level curve of $G_{K,\lambda}$ is a Euclidean circle for some $0 \leq \lambda < 1$? Equivalently, what are the plane convex bodies $K$ such that an illumination body of $K$ is a Euclidean disk?
\end{question}

\medskip
\noindent
\textbf{Acknowledgements.}\\
The authors express their gratitude to an anonymous referee whose remarks led to the statement in Theorem~\ref{thm:characterization_ch}, and more general versions of Lemmas~\ref{lem:lambdazero} and \ref{lem:piecewise}.

\end{document}